 \newtheorem{thm}{Theorem}[section]
 \newtheorem{cor}[thm]{Corollary}
 \newtheorem{lem}[thm]{Lemma}
 \theoremstyle{definition}
 \newtheorem{defn}[thm]{Definition}
 \theoremstyle{remark}
 \newtheorem{rem}[thm]{Remark}
 \newtheorem{ex}[thm]{Example}
 \numberwithin{equation}{section}
\begin{document}

%-------------------------------------------------------------------------
% editorial commands: to be inserted by the editorial office
%
%\firstpage{1} \volume{228} \Copyrightyear{2004} \DOI{003-0001}
%
%
%\seriesextra{Just an add-on}
%\seriesextraline{This is the Concrete Title of this Book\br H.E. R and S.T.C. W, Eds.}
%
% for journals:
%

%\firstpage{1}
%\issuenumber{1}
%\Volumeandyear{1 (2004)}
%\Copyrightyear{2004}
%\DOI{003-xxxx-y}
%\Signet
%\commby{inhouse}
%\submitted{March 14, 2003}
%\received{March 16, 2000}
%\revised{June 1, 2000}
%\accepted{July 22, 2000}
%
%
%
%---------------------------------------------------------------------------
%Insert here the title, affiliations and abstract:
%

\title[$H$-tensional maps]{$H$-tensional maps}

%----------Author 1
\author{Bouazza Kacimi}
\address{%
University of Mascara\\
BP 305 Route de Mamounia\\
29000 Mascara\\
Algeria}
\email{bouazza.kacimi@univ-mascara.dz}

%----------Author 2
\author[Ahmed Mohammed Cherif]{Ahmed Mohammed Cherif}

\address{%
University of Mascara\\
BP 305 Route de Mamounia\\
29000 Mascara\\
Algeria}

\email{a.mohammedcherif@univ-mascara.dz}

%----------Author 3
\author{Mustafa \"{O}zkan}
\address{ Gazi University \\
Faculty of Sciences\\
Department of Mathematics \\
06500 Ankara\\
Turkey}
\email{ozkanm@gazi.edu.tr}

%----------classification, keywords, date
\subjclass{Primary 53C21; Secondary 53C50}

\keywords{Harmonic map, Tension field, Submanifold}

\date{January 1, 2004}
%----------additions
%\dedicatory{To my boss}
%%% ----------------------------------------------------------------------

%% (optional) Abstract
\begin{abstract}
	
\replaced{
In this paper, we study smooth maps, namely $H$-tensional,  between Riemannian manifolds whose tension fields are harmonic. Notice that, harmonic maps and minimal submanifolds form a special subclass of this notion. We obtain several nonexistence results for nonharmonic $H$-tensional maps and for nonminimal $H$-tensional submanifolds.}{	
In this paper, we study smooth maps between Riemannian manifolds whose tension fields are harmonic; we call such maps $H$-tensional. Harmonic maps and minimal submanifolds form special subclasses of this notion. We obtain several nonexistence results for nonharmonic $H$-tensional maps and for nonminimal $H$-tensional submanifolds.}
\end{abstract}
\maketitle
\section{Introduction}
\replaced{Let}{Given a smooth map} $\psi: (M,g)\rightarrow (N,h)$ \added{be a smooth map} between Riemannian manifolds of dimensions $m$ and $n$, respectively. The energy functional (also called the Dirichlet energy) of $\psi$, over a compact domain $\Omega$ of $M$, is defined by
\begin{equation}\label{eq1.1}
	E(\psi)=\frac{1}{2}\int_{\Omega}|d\psi|^{2}dv_g .
\end{equation}
The map $\varphi$ is called harmonic if it is a critical point of the energy functional \eqref{eq1.1}. The corresponding Euler-Lagrange equation is (see \cite{baird,eells2}):
\begin{equation*}
	\tau(\psi)=\operatorname{Tr}_{g}(\nabla d\psi)=\sum_{i=1}^{m}
	\{\nabla^{\psi}_{e_{i}}d\psi(e_{i})-d\psi(\nabla_{e_{i}}e_{i})\}=0,
\end{equation*}
where $\{e_{i}\}_{i=1}^{m}$ is a local orthonormal frame field of $(M,g)$, $\tau(\psi)$ is the tension field of $\psi$, $\nabla^{\psi}$ denotes the connection on the Riemannian vector bundle $\psi^{-1}TN\rightarrow M$ induced from the Levi-Civita connection $\nabla^{N}$ of $(N,h)$ and $\nabla$ the Levi-Civita connection of $(M,g)$. Thus the tension field $\tau(\psi)$ measures the failure of $\psi$ to be harmonic \replaced{and}{,} it vanishes precisely when $\psi$ is harmonic (see, for instance \cite{eells1}).

For an isometric immersion $\iota: M^{m}\rightarrow \mathbb{R}^{n}$ with \added{the} mean curvature vector $H$, \deleted{B.Y.} Chen (see \cite{chen2}) introduced the notion of  biharmonic submanifolds by requiring that $H$ is harmonic, i.e.\deleted{,} $\Delta H=\Delta (\frac{1}{m}\tau(\iota))=0$, where $\Delta$ is the Laplacian operator and proposed the following conjecture:

\noindent\textbf{Conjecture 1}. Any biharmonic submanifold of the Euclidean space is harmonic, i.e., minimal.

Note that \replaced{this}{the Chen} conjecture is not true for $\mathbb{R}^{3}$ with a non-flat Riemannian metric (see \cite{javaloyes}).

\replaced{Here, we remark  numerous partial results for this conjecture \cite{chen21,ou1}.}{Despite numerous partial results (see \cite{chen21}\cite{ou1}), this conjecture remains open.}

 Since \replaced{this}{Chen's} conjecture imposes no \added{an} assumption on the completeness of the submanifold, it is considered a problem in local differential geometry. Akutagawa and Maeta (see \cite{akutagawa}) reformulated the conjecture into a problem in global differential geometry:

\noindent\textbf{Conjecture 2}. Any complete biharmonic submanifold of the Euclidean space is minimal.

This conjecture has also attracted much attention, and many partial results exist \added{in the literature} (\replaced{for example,}{see} \cite{akutagawa}, \cite{maeta}), but the conjecture is still unresolved \added{as well as Conjecture~
	1}.

Motivated by Chen's notion of \added{a} biharmonic submanifold, in this note we introduce and study smooth maps between Riemannian manifolds whose tension fields are harmonic, that is, lie in the kernel of the rough Laplacian. We call such maps $H$-tensional. Harmonic maps are trivially $H$-tensional, but the converse need not hold\replaced{ which gives that}{; thus,} the class of $H$-tensional maps strictly extends the class of harmonic maps. This viewpoint also extends\added{,} naturally\added{,} to submanifolds: a submanifold $M$
is called $H$-tensional if the isometric immersion defining it is an $H$-tensional map. \replaced{Note that minimal}{Minimal} submanifolds are always $H$-tensional, and Chen's biharmonic submanifolds appear as a special case. \replaced{The aim of the paper is to develop some basic properties of $H$-tensional maps and $H$-tensional submanifolds. We also establish several nonexistence theorems for nonharmonic $H$-tensional maps and nonminimal $H$-tensional submanifolds. Finally, we }{
We develop basic properties of $H$-tensional maps and $H$-tensional submanifolds, establish several nonexistence theorems for nonharmonic $H$-tensional maps and nonminimal $H$-tensional submanifolds, and} propose two conjectures generalizing Conjectures~1 and 2.

Throughout this paper, we adopt the following conventions. For the curvature tensor, we use $R(X,Y)=[\nabla_{X},\nabla_{Y}]-\nabla_{[X,Y]}$, so that the Ricci operator is given by $\operatorname{Ric}(X)=\overset{m}{\underset{i=1}{\sum}}R(X,e_{i})e_{i}$. For functions $f\in C^{\infty}(M)$, the Laplacian is $\Delta f = \overset{m}{\underset{i=1}{\sum}}[(\nabla_{e_{i}}e_{i})f-e_{i}(e_{i}(f))]$. Also, we choose the convention $\Delta^{\psi}\xi=-\overset{m}{\underset{i=1}{\sum}}
\left(\nabla^{\psi}_{e_{i}}\nabla^{\psi}_{e_{i}}\xi-
\nabla^{\psi}_{\nabla_{e_{i}}e_{i}}\xi\right)$ for the rough Laplacian acting on sections $\xi\in\Gamma( \psi^{-1}TN)$.

\section{\added{The formal definition of the notion} ``$H$-tensional map'' \added{and some examples}}

\replaced{Although we have already said that smooth maps between Riemannian
	manifolds whose tension fields will be called $H$-tensional, the following is the formal definition.}{In this section, we introduce the notion of $H$-tensional maps
between Riemannian manifolds, and provide
some examples of such maps.}
\begin{defn}\label{def1.11}
Let $\psi: (M^{m},g)\longrightarrow (N^{n},h)$ be a smooth map between Riemannian manifolds. The map $\psi$ is called an $H$-tensional map if its tension field $\tau(\psi)$ satisfies
\begin{equation}\label{eq3.41}
	\Delta^{\psi} \tau(\psi)=0 .
\end{equation}
\end{defn}

\begin{rem}
\replaced{Every harmonic map is $H$-tensional by \eqref{eq3.41}. Hence,}{From \eqref{eq3.41}, it follows immediately that every harmonic map is} $H$-tensional map\replaced{s, naturally,}{. Hence, $H$-tensional maps naturally} generalize the notion of harmonic maps. 

Moreover, when the target manifold $N$ is the Euclidean space, \added{we obtain that} $H$-tensional maps coincide with biharmonic maps (see \cite{jiang}).
\end{rem}

\begin{ex}
Consider the Kelvin transformation given by
\begin{eqnarray*}
  \psi\;:\mathbb{R}^{m}\setminus\{0\} &\longrightarrow& \mathbb{R}^{m}\setminus\{0\} \\
  x &\longmapsto& \frac{x}{|x|^{l}} \, .
\end{eqnarray*}
Then, \replaced{we have that}{(see \cite{cherif1})}
\begin{equation*}
   \tau(\psi)= l(l-m)|x|^{-2-l}x,
\end{equation*}
and
\begin{equation*}
\Delta^{\psi}\tau(\psi)=-l(l-m)(-2-l)(-2+m-l)|x|^{-4-l}x 
\end{equation*}
\added{by \cite{cherif1}.} Hence, $\psi$ is nonharmonic $H$-tensional map if and only if $m=l+2$ or $l=-2$.
\end{ex}
\begin{ex}
Consider the hyperbolic $3$-space $$\mathbb{H}^3=\{(x,y,z)\in\mathbb{R}^3:\;z>0\}$$
\replaced{and let}{Let the identity map}
\begin{eqnarray*}
% \nonumber to remove numbering (before each equation)
  I\;:\left(\mathbb{H}^3,z^{-2p}(dx^2+dy^2+dz^2)\right) &\longrightarrow& \left(\mathbb{H}^3,w^{-2}(du^2+dv^2+dw^2)\right) \\
  (x,y,z) &\longmapsto& (x,y,z),
\end{eqnarray*}
\added{be the identity map,} where $p\in\mathbb{R}$.

 The vector fields
\begin{equation*}
	e_{1}=z^{p}\frac{\partial}{\partial x},\;\;e_{2}=z^{p}\frac{\partial}{\partial y}
	,\;\;e_{3}=z^{p}\frac{\partial}{\partial z},
\end{equation*}
constitute an orthonormal basis of $\left(\mathbb{H}^3,z^{-2p}(dx^2+dy^2+dz^2)\right)$\replaced{. We have}{,
for which, we have the Lie brackets:}

\added{$\bullet$ the components of the Levi-Civita connection:}
\begin{equation*}
	[e_{1},e_{2}]=0,\;\;[e_{1},e_{3}]=-pz^{p-1}e_{1},\;\;[e_{2},e_{3}]=-pz^{p-1}e_{2};
\end{equation*}

\replaced{$\bullet$ the components of the Levi-Civita connection:}{The components of the Levi-Civita connection are determined by}
\begin{align}\label{eq4.1}
	\nabla_{e_{1}}e_{1}&=pz^{p-1}e_{3},           &  \nabla_{e_{1}}e_{2} &=0,              &  \nabla_{e_{1}}e_{3}&=-pz^{p-1}e_{1},\nonumber\\
	\nabla_{e_{2}}e_{1}&=0,         &  \nabla_{e_{2}}e_{2}&=pz^{p-1}e_{3},   &  \nabla_{e_{2}}e_{3}&=-pz^{p-1}e_{2},\\
	\nabla_{e_{3}}e_{1}&=0,   &  \nabla_{e_{3}}e_{2}&=0,          &  \nabla_{e_{3}}e_{3}&=0.\nonumber
\end{align}
The vector fields
\begin{equation*}
	\bar{e}_{1}=w\frac{\partial}{\partial u},\;\;\bar{e}_{2}=w\frac{\partial}{\partial v}
	,\;\;\bar{e}_{3}=w\frac{\partial}{\partial w},
\end{equation*}
constitute an orthonormal basis of $\left(\mathbb{H}^3,w^{-2}(du^2+dv^2+dw^2)\right)$. A straightforward computation yields \added{that}
\begin{align*}
	\tau(I)&=(1-p)z^{2p-2}\bar{e}_{3},\\
	\Delta^I(\tau(I))&=2(p-1)(p^{2}-4p+2)z^{4p-4}\bar{e}_{3}.
\end{align*} 
\replaced{Hence,}{so} for $p=2\pm\sqrt{2}$, \replaced{we obtain that $I$ is a}{$I$ is} nonharmonic $H$-tensional map.
\end{ex}

\section{Nonexistence results of nonharmonic $H$-tensional maps}

In this section, we establish several nonexistence results of nonharmonic $H$-tensional maps. 

We begin by recalling that a smooth function $f$ on a Riemannian
manifold $(M,g)$ is said to be strongly convex
if its Hessian satisfies
\begin{equation*}
    (\operatorname{Hess} f)(X,X)> 0\;\;\;\;\text{ for all}\; X\in\Gamma(TM),
\end{equation*}
(see \cite{xin}).

Before proceeding, we introduce a class of vector fields that will play a key role in our subsequent results.
\begin{defn}
A vector field $\xi$ on a Riemannian manifold $(M,g)$ is said to be of rough-type
if it satisfies
\begin{equation*}
  \nabla_{X}\nabla_{X}\xi-\nabla_{\nabla_{X}X}\xi=0,\;\;\;\forall X\in\Gamma(TM).
\end{equation*}
\end{defn}
To illustrate this concept, we describe all rough-type vector fields on the Euclidean
space.
\begin{thm}\label{Rough}
Let $\xi=\sum_{j=1}^m f_{j}\frac{\partial}{\partial x_{j}}$ be a vector field on the Euclidean
space $(\mathbb{R}^{m}, <,>)$ with the canonical Euclidean metric $<,>$ and the usual global coordinates $(x_{1},\cdots,x_{m})$, such that $f_{j}=f_j(x)=f_{j}(x_{1},\cdots,x_{m})$. Then, \added{ the following statements are equivalent:}
\begin{enumerate}
	\item $\xi$ is of rough-type vector field,
	\item
\begin{equation}\label{eqr}
    f_j(x)=\sum_{P\subseteq\{1,\dots,m\}} c_{j,P}\,\Big(\prod_{i\in P} x_i\Big),
\end{equation}
where the sum is taken on all subset $P\subseteq\{1,\dots,m\}$, if $P=\emptyset$ then the product is by convention $1$ and the $c_{j,P}$ are constants.
\end{enumerate}
\end{thm}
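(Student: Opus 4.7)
The plan is to translate the rough-type condition into an explicit PDE on the component functions $f_j$ in Euclidean coordinates, and then recognise the solution space as the span of the monomials $\prod_{i\in P}x_i$.

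As a first step, I would compute the left-hand side of the rough-type identity directly. In $(\mathbb{R}^{m},\langle\cdot,\cdot\rangle)$ the Levi-Civita connection is coordinate-wise differentiation, so for $X=\sum_{i} X^{i}\partial_{i}$ one has $\nabla_{X}\xi=\sum_{j}X(f_{j})\partial_{j}$ and $\nabla_{X}X=\sum_{i}X(X^{i})\partial_{i}$. A short expansion, in which the first-derivative terms $X(X^{k})\partial_{k}f_{j}$ appearing inside $\nabla_{X}\nabla_{X}\xi$ cancel exactly with $\nabla_{\nabla_{X}X}\xi$, yields
$$\nabla_{X}\nabla_{X}\xi-\nabla_{\nabla_{X}X}\xi=\sum_{j}\Big(\sum_{k,\ell} X^{k}X^{\ell}\,\partial_{k}\partial_{\ell} f_{j}\Big)\partial_{j}.$$
Specialising to the coordinate vector fields $X=\partial_{i}$, for which $\nabla_{\partial_{i}}\partial_{i}=0$, the rough-type property reduces to the pointwise system $\partial_{i}^{2}f_{j}=0$ for all $i,j$, i.e.\ every $f_{j}$ is affine in each coordinate variable separately.

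The second step is the standard analytic fact that a smooth function on $\mathbb{R}^{m}$ satisfying $\partial_{i}^{2}f\equiv 0$ for every $i$ is a multilinear polynomial in $x_{1},\ldots,x_{m}$. I would prove this by induction on $m$. The base case $m=1$ integrates $\partial_{1}^{2}f=0$ twice to obtain $f=c_{\emptyset}+c_{\{1\}}\,x_{1}$. For $m\geq 2$, the equation $\partial_{1}^{2}f=0$ lets me write $f(x)=A(x_{2},\ldots,x_{m})+x_{1}\,B(x_{2},\ldots,x_{m})$; the remaining equations $\partial_{i}^{2}f=0$ for $i\geq 2$ force $\partial_{i}^{2}A=\partial_{i}^{2}B=0$, so by the inductive hypothesis $A$ and $B$ are multilinear polynomials on $\mathbb{R}^{m-1}$. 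Distributing the factor $x_{1}$ through $B$ and grouping by subsets $P\subseteq\{1,\ldots,m\}$ according to whether or not they contain the index $1$ yields the normal form \eqref{eqr}.

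The converse direction is immediate: each monomial $\prod_{i\in P}x_{i}$ is linear in every variable that occurs in it and constant in the others, so all of its pure second partials vanish; by linearity the same holds for any $f_{j}$ of the form \eqref{eqr}, whereupon the identity obtained in the first step shows that $\xi$ is of rough-type. The only step that requires some care is the algebraic cancellation in the first step, where one must check that the $\partial_{k}X^{i}$ cross terms in $\nabla_{X}\nabla_{X}\xi$ precisely match $\nabla_{\nabla_{X}X}\xi$ so that only the pure second derivatives of the $f_{j}$ survive; the subsequent induction on $m$ is then entirely routine.
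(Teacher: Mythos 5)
Your first-step computation of
$\nabla_{X}\nabla_{X}\xi-\nabla_{\nabla_{X}X}\xi=\sum_{j}\bigl(\sum_{k,\ell}X^{k}X^{\ell}\,\partial_{k}\partial_{\ell}f_{j}\bigr)\partial_{j}$
is correct, but the sentence ``specialising to $X=\partial_{i}$ \dots the rough-type property reduces to $\partial_{i}^{2}f_{j}=0$'' only establishes one implication, and the missing one fails. The definition demands the identity for \emph{every} $X\in\Gamma(TM)$. Taking $X$ to be an arbitrary constant vector field $v$ at a point (so $\nabla_{X}X=0$), your own formula gives $\sum_{k,\ell}v^{k}v^{\ell}\,\partial_{k}\partial_{\ell}f_{j}=0$ for all $v$, and since the Hessian of $f_{j}$ is symmetric this forces \emph{all} second partials, mixed ones included, to vanish; equivalently, rough-type means each $f_{j}$ is affine. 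Your converse direction (``all pure second partials vanish, whereupon the identity of the first step shows $\xi$ is rough-type'') is therefore broken, because that identity involves the mixed partials $\partial_{k}\partial_{\ell}f_{j}$ with $k\neq\ell$, which do not vanish for the multilinear monomials. Concretely, $\xi=x_{1}x_{2}\,\partial_{1}$ is of the form \eqref{eqr}, yet for $X=\partial_{1}+\partial_{2}$ one has $\nabla_{X}X=0$ and $\nabla_{X}\nabla_{X}\xi=2\,\partial_{1}\neq 0$, so $\xi$ is not rough-type.

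You should know that the paper's own proof makes exactly the same unjustified reduction: it asserts ``by a direct calculation'' that rough-type is equivalent to $\partial^{2}f_{j}/\partial x_{k}^{2}=0$ for all $j,k$, never addressing the mixed terms, so the theorem as stated is itself false for $m\geq 2$ (only the terms of \eqref{eqr} with $|P|\leq 1$ survive; this does not affect the later applications, since the position vector field and $\operatorname{grad}f$ for $f=\tfrac12|x|^{2}$ are affine). The induction you run afterwards is a perfectly good solution of the system $\partial_{k}^{2}f=0$, and matches the paper's induction on $m$ almost verbatim; the problem is that this system is not equivalent to the rough-type condition. Either the definition must be weakened to require the identity only for the coordinate fields $\partial_{i}$, in which case your argument (and the paper's) goes through, or the conclusion must be corrected to ``each $f_{j}$ is affine,'' in which case the first step should retain the full Hessian and no induction is needed.
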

\begin{proof}
\added{$(1) \Rightarrow (2).$} By a direct calculation, we
get that $\xi$ is of rough-type vector field if
\begin{equation}\label{eqrr}
   \frac{\partial^{2}f_j(x)}{\partial x_k^2}=0,\qquad\text{for all }j\in\{1,\dots,m\},\; k\in\{1,\dots,m\}.
\end{equation}
We prove \eqref{eqr} by induction\replaced{. W}{, w}e fix $f=f_{j}$.

If $m=1$. Then $\frac{d^{2}f}{dx_{1}^{2}}=0$ so $f(x_{1})=a x_{1}+ b$, where $a$\replaced{ and}{,} $b$  are constants\replaced{. T}{, t}hus \eqref{eqr} holds because the only possible subsets $P$ of $\{1\}$ are $\emptyset$ and $\{1\}$.

Assume \added{that} the claim  holds for $m-1$. Let $ \bar{x}=(x_{1},\cdots,x_{m-1})$\replaced{. By}{, from} \eqref{eqrr}, we have \added{obtain that} (for each fixed $\bar{x}$))
\begin{equation*}
\frac{ \partial^2f(\bar{x},x_m)}{\partial x_m} = 0.
\end{equation*}
Then
\begin{equation}\label{eqrrr}
f(\bar{x},x_m) = J(\bar{x})\,x_m + K(\bar{x}),
\end{equation}
where both $J$ and $K$ are smooth functions depending on $\bar{x}$.
Differentiate \eqref{eqrrr} twice with respect to $x_{i}$ (treating  $x_{m}$ as a parameter) and apply the remaining PDEs
\begin{equation}
   \frac{\partial^{2}f(x)}{\partial x_k^2}=0,\qquad\text{for all }k\in\{1,\dots,m-1\},
\end{equation}
we obtain \added{that}
\begin{equation*}
    0 = \frac{ \partial^2f(\bar{x},x_m)}{\partial x_i^2} = \frac{\partial^2J(\bar{x})}{\partial x_i^2}\,x_m + \frac{\partial^2K(\bar{x})}{\partial x_i^2}.
\end{equation*}
Thus
\begin{equation*}
    \frac{\partial^2J(\bar{x})}{\partial x_i^2}=0\qquad\text{and}\qquad \frac{\partial^2K(\bar{x})}{\partial x_i^2}=0.
\end{equation*}
By the induction hypothesis, we get \added{that}
\begin{equation*}
J(\bar{x})=\sum_{P\subseteq\{1,\dots,m-1\}}  \alpha_P \Big(\prod_{i\in P} x_i\Big),\;\text{and}\;
K(\bar{x})=\sum_{P\subseteq\{1,\dots,m-1\}} \beta_P \Big(\prod_{i\in P} x_i\Big)
\end{equation*}
where both $\alpha_P$ and $\beta_P$ are constants. Therefore by \eqref{eqrrr} we find
\begin{equation*}
\begin{aligned}
f_{j}(x) &= \sum_{P\subseteq\{1,\dots,m-1\}} \alpha_P \Big(\prod_{i\in P} x_i\Big)\,x_m
+ \sum_{P\subseteq\{1,\dots,m-1\}} \beta_P \Big(\prod_{i\in P} x_i\Big)\\
&=\sum_{P\subseteq\{1,\dots,m\}} c_{j,P}\,\Big(\prod_{i\in P} x_i\Big).
  \end{aligned}
\end{equation*}
\replaced{$(2) \Rightarrow (1).$ It is easy to see that }{Conversely,} if \eqref{eqr} holds\added{,} then $\xi=\sum_{j=1}^m f_{j}\frac{\partial}{\partial x_{j}}$ is of rough-type vector field on $(\mathbb{R}^{m}, <,>)$ \added{as desired}.
\end{proof}

\begin{rem}
Since the rough-type vector fields coincide with the Jacobi-type vector fields on $(\mathbb{R}^{m}, <,>)$ (cf. \cite{deshmukh}), \replaced{we can easily say that}{then} Theorem~\ref{Rough} remains true for such classes of vector fields.
\end{rem}
\begin{ex}
Let $\xi=\sum_{j=1}^3 f_j(x_1,x_2,x_3)\frac{\partial}{\partial x_{j}}$ be a vector field on $(\mathbb{R}^{3}, <,>)$. Since $m=3$,  the set of subsets of $\{1,2,3\}$ has $2^{3}=8$ elements:
\begin{equation*}
    \emptyset,\;\{1\},\;\{2\},\;\{3\},\;\{1,2\},\;\{1,3\},\;\{2,3\},\;\{1,2,3\}.
\end{equation*}
So by Theorem~\ref{Rough}, $\xi$ is of rough-type if and only if
\begin{equation*}
\begin{aligned}
f_j(x_1,x_2,x_3) &= c_{j,\emptyset}
+ c_{j,\{1\}}\,x_1
+ c_{j,\{2\}}\,x_2
+ c_{j,\{3\}}\,x_3
+ c_{j,\{1,2\}}\,x_1x_2\\
&\quad+ c_{j,\{1,3\}}\,x_1x_3+ c_{j,\{2,3\}}\,x_2x_3
+ c_{j,\{1,2,3\}}\,x_1x_2x_3.
  \end{aligned}
\end{equation*}
\end{ex}
\begin{ex}
The
position vector field $\xi=\sum_{j=1}^m x_{j}\frac{\partial}{\partial x_{j}}$ of $\mathbb{R}^{m}$
is of rough-type.
\end{ex}

\begin{thm}\label{Th4.999}
Let $(M,g$) be a compact\deleted{,} orientable Riemannian manifold and \deleted{let} $(N, h)$ be a Riemannian manifold that possesses a strongly convex function
$f$. If the vector field $\operatorname{grad}^{N}f$ is of rough-type on $(N, h)$, then every $H$-tensional map $\psi: (M,g)\rightarrow (N,h)$ is constant.
\end{thm}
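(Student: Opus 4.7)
The plan is to test the $HS$-tensional equation $\Delta^\psi\tau(\psi)=0$ against the section $V\circ\psi\in\Gamma(\psi^{-1}TN)$, where $V=\operatorname{grad}^N f$. The rough-type hypothesis on $V$ will transform the left-hand side into $\operatorname{Hess}^N f$ evaluated on $\tau(\psi)$, and strong convexity will then force $\tau(\psi)\equiv 0$; once $\psi$ is known to be harmonic, the classical maximum-principle argument finishes the proof.

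The key computation is to evaluate $\Delta^\psi(V\circ\psi)$. Writing $A=\nabla^N V$ as the $(1,1)$-tensor $Y\mapsto\nabla^N_Y V$ on $N$, one has $\nabla^\psi_X(V\circ\psi)=(A\circ\psi)(d\psi(X))$ for every $X\in\Gamma(TM)$. Differentiating once more yields the tensorial identity
\begin{equation*}
\nabla^\psi_X\bigl((A\circ\psi)(d\psi(X))\bigr) \;=\; (\nabla^N_{d\psi(X)}A)(d\psi(X)) \;+\; A\bigl(\nabla^\psi_X d\psi(X)\bigr),
\end{equation*}
combined with the observation that $(\nabla^N_Y A)(Y)=\nabla^N_Y\nabla^N_Y V-\nabla^N_{\nabla^N_Y Y}V=0$ by the rough-type property of $V$ (valid pointwise, since the left-hand side is tensorial in $Y$). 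Tracing over a local orthonormal frame $\{e_i\}$ on $M$ and recognising that $\nabla^\psi_{e_i}d\psi(e_i)-d\psi(\nabla^M_{e_i}e_i)$ reassembles into $\tau(\psi)$, the first term drops out and I expect to obtain the clean formula
\begin{equation*}
\Delta^\psi(V\circ\psi) \;=\; -\nabla^N_{\tau(\psi)}V.
\end{equation*}

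With this identity, the argument closes quickly. Self-adjointness of the rough Laplacian on the compact orientable manifold $M$, together with hypothesis \eqref{eq3.41}, gives
\begin{equation*}
\int_M h\bigl(\Delta^\psi(V\circ\psi),\tau(\psi)\bigr)\,dv_g \;=\; \int_M h\bigl(V\circ\psi,\Delta^\psi\tau(\psi)\bigr)\,dv_g \;=\; 0.
\end{equation*}
Substituting the formula above and using the identity $h(\nabla^N_X V,Y)=\operatorname{Hess}^N f(X,Y)$, this becomes
\begin{equation*}
\int_M \operatorname{Hess}^N f\bigl(\tau(\psi),\tau(\psi)\bigr)\,dv_g \;=\; 0.
\end{equation*}
The integrand is pointwise non-negative and strictly positive wherever $\tau(\psi)\neq 0$ by strong convexity of $f$, forcing $\tau(\psi)\equiv 0$; hence $\psi$ is harmonic.

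To upgrade \emph{harmonic} to \emph{constant} I would then invoke the classical argument: for $F=f\circ\psi$ the composition formula gives $\tau(F)=\sum_i\operatorname{Hess}^N f(d\psi(e_i),d\psi(e_i))+h(V\circ\psi,\tau(\psi))$, and with $\tau(\psi)=0$ this is a non-negative function on $M$ with vanishing integral, so it vanishes pointwise, and strong convexity forces $d\psi\equiv 0$. The main obstacle I anticipate is the clean derivation of $\Delta^\psi(V\circ\psi)=-\nabla^N_{\tau(\psi)}V$, and in particular the careful justification that the rough-type identity $(\nabla^N_Y A)(Y)=0$ for $Y\in\Gamma(TN)$ transfers pointwise along $\psi$ when $Y$ is replaced by the section $d\psi(e_i)\in\Gamma(\psi^{-1}TN)$; once this reduction is established, the rest of the proof is essentially formal.
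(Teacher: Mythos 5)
Your proposal is correct and follows essentially the same route as the paper: both use the rough-type hypothesis to reduce the second covariant derivative of $(\operatorname{grad}^{N}f)\circ\psi$ to $\nabla^{N}_{\tau(\psi)}\operatorname{grad}^{N}f$, integrate against $\tau(\psi)$ over the compact manifold, and invoke strong convexity to force $\tau(\psi)=0$, then finish by the classical convex-function argument for harmonic maps. The only cosmetic difference is that the paper integrates the Laplacian of the scalar $\rho=h((\operatorname{grad}^{N}f)\circ\psi,\tau(\psi))$ (using that $\tau(\psi)$ is parallel on compact $M$ to drop cross terms), whereas you move $\Delta^{\psi}$ across by self-adjointness, which amounts to the same integration by parts.
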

\begin{proof}
Let $\{e_{i}\}_{i=1}^{m}$ be a normal orthonormal frame on $M$ at $x\in M$. We set
\begin{equation*}
    \rho=h((\operatorname{grad}^{N}f)\circ\psi,\tau(\psi)).
\end{equation*}
\replaced{Then}{then}
\begin{equation}\label{eqn1}
    e_{i}(\rho)=h(\nabla_{e_{i}}^{\psi}(\operatorname{grad}^{N}f)\circ\psi,\tau(\psi))+
    h((\operatorname{grad}^{N}f)\circ\psi,\nabla_{e_{i}}^{\psi}\tau(\psi)),
\end{equation}
from \eqref{eqn1}, \replaced{which implies that}{we get}
\begin{align}
  \Delta(\rho) &=-h(\nabla_{e_{i}}^{\psi}\nabla_{e_{i}}^{\psi}(\operatorname{grad}^{N}f)\circ\psi,\tau(\psi))-2
    h(\nabla_{e_{i}}^{\psi}(\operatorname{grad}^{N}f)\circ\psi,\nabla_{e_{i}}^{\psi}\tau(\psi))\nonumber\\
   &\quad -h((\operatorname{grad}^{N}f)\circ\psi,\nabla_{e_{i}}^{\psi}\nabla_{e_{i}}^{\psi}\tau(\psi)) \label{eqn2}
\end{align}
\replaced{ because of}{by \eqref{eqn2}, and}  the fact that $\psi$ is $H$-tensional map from a compact orientable Riemannian manifold $(M,g)$ \added{and \eqref{eqn2}.} \replaced{Hence}
{, we find:}
\begin{equation}\label{eqn3}
    \Delta(\rho) =-h(\nabla_{e_{i}}^{\psi}\nabla_{e_{i}}^{\psi}(\operatorname{grad}^{N}f)\circ\psi,\tau(\psi)).
\end{equation}
As $\operatorname{grad}^{N}f$ is a rough-type vector field on $(N,h)$, by \eqref{eqn3}, we yield \added{that}
\begin{equation}\label{eqn6}
    \Delta(\rho) =-h(\nabla_{\tau(\psi)}^{N}(\operatorname{grad}^{N}f),\tau(\psi)).
\end{equation}
\replaced{Hence, by}{From} \eqref{eqn6}, $\operatorname{Hess}f>0$, and \deleted{the divergence theorem, we get} $ \tau(\psi)= 0$, i.e., $\psi$ is
harmonic map \added{by the divergence theorem.} \replaced{Thus,}{thus} by Corollary~1.4.4 in \cite{xin} we deduce that $\psi$ is constant.
\end{proof}
Now, consider the function $f$ on $(\mathbb{R}^{n}, <,>)$ defined by
\begin{equation*}
    f(x)=\frac{1}{2}|x|^{2},\;x\in\mathbb{R}^{n}.
\end{equation*}
Since $\operatorname{grad}f$ is the position vector field in $\mathbb{R}^{n}$\replaced{, we obtain that}{ then} it is a rough-type vector field\replaced{. M}{, m}oreover it satisfies $\operatorname{Hess}f=<,>$ on $\mathbb{R}^{n}$\replaced{. Therefore,}{, thus} by Theorem~\ref{Th4.999} \replaced{yields}{we find the following result}.
\begin{cor}\label{Co1000}
Let $(M,g$) be a compact, orientable Riemannian manifold. Then, every $H$-tensional map $\psi: (M,g)\rightarrow (\mathbb{R}^{n}, <,>)$ is constant.
\end{cor}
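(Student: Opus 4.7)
The plan is to realize the corollary as a direct application of Theorem~\ref{Th4.999} to the target manifold $N=(\mathbb{R}^{n},\langle\,,\,\rangle)$ equipped with the specific function $f(x)=\tfrac{1}{2}|x|^{2}$. Concretely, I would need to check two hypotheses: that $f$ is strongly convex, and that $\operatorname{grad}^{N} f$ is a rough-type vector field on $\mathbb{R}^{n}$.

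For strong convexity, a short calculation in canonical coordinates gives $\operatorname{grad}^{N} f=\sum_{j=1}^{n} x_{j}\,\partial/\partial x_{j}$ and hence $\operatorname{Hess} f=\langle\,,\,\rangle$, which is (pointwise) positive definite. In particular $(\operatorname{Hess} f)(X,X)>0$ for every nonzero $X$, so $f$ is strongly convex in the sense recalled at the start of Section~4.

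For the rough-type condition, I would simply identify $\operatorname{grad}^{N} f$ with the position vector field $\xi=\sum_{j} x_{j}\,\partial/\partial x_{j}$. The coefficient functions $f_{j}(x)=x_{j}$ are affine in each variable, so $\partial^{2}f_{j}/\partial x_{k}^{2}=0$ for all $j,k$; equivalently, they fit the form \eqref{eqr} of Theorem~\ref{Rough} with $c_{j,\{j\}}=1$ and all other constants zero. This is exactly the example already recorded after that theorem, so the verification is immediate.

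With both hypotheses in place, Theorem~\ref{Th4.999} applies verbatim to any $HS$-tensional map $\psi:(M,g)\rightarrow(\mathbb{R}^{n},\langle\,,\,\rangle)$, forcing $\psi$ to be constant. There is no real obstacle here; the entire content of the corollary is the verification that the Euclidean squared-distance function $\tfrac12|x|^2$ simultaneously supplies the strong convexity and the rough-type gradient required by the general nonexistence theorem, so the argument reduces to quoting the previous theorem after these two one-line checks.
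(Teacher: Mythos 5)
Your proposal is correct and coincides with the paper's own argument: the authors likewise take $f(x)=\tfrac12|x|^2$, observe that $\operatorname{grad} f$ is the position vector field (hence rough-type, as in their example) and that $\operatorname{Hess} f=\langle\,,\,\rangle$ is positive definite, and then invoke Theorem~\ref{Th4.999}. Nothing further is needed.
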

\begin{rem}
Corollary~\ref{Co1000} has been proved by the second author in \cite{cherif2} in the setting of the biharmonic maps.
\end{rem}

Now, we need the following Lemma for \replaced{the second main theorem of this section.}{later use}
\begin{lem}\label{lem100}
Let $\psi: (M^{m},g)\longrightarrow (N^{n},h)$ be an $H$-tensional map from a non-compact complete Riemannian manifold $(M, g)$ into a Riemannian manifold $(N, h)$ and let $q$ be a real constant satisfying $2\leq q<\infty$. If\added{,} for such a $q$,
\[
\int_M |\tau(\psi)|^q \, dv_g < \infty,
\]
then $\nabla^\psi _X \tau(\psi) = 0$ for any vector field $X$ on $M$. In particular, $|\tau(\psi)|$ is constant.
\end{lem}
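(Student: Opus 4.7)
My plan is the standard Bochner-with-cutoff argument, adapted to the weighted $L^{q}$ integrability hypothesis. The starting point is the identity $\Delta^{\psi}\tau(\psi)=0$, which holds because $\psi$ is $HS$-tensional. A direct computation in a local orthonormal frame (analogous to the one carried out in the proof of Theorem~\ref{Th4.999}), together with the defining formula for the rough Laplacian $\Delta^{\psi}$, produces the Weitzenb\"{o}ck--Bochner identity
\begin{equation*}
\frac{1}{2}\,\Delta|\tau(\psi)|^{2}\;=\;|\nabla^{\psi}\tau(\psi)|^{2}-h\bigl(\Delta^{\psi}\tau(\psi),\tau(\psi)\bigr)\;=\;|\nabla^{\psi}\tau(\psi)|^{2},
\end{equation*}
so that $|\tau(\psi)|^{2}$ is subharmonic on $M$; Kato's inequality simultaneously yields $|\nabla|\tau(\psi)||\le|\nabla^{\psi}\tau(\psi)|$.

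Next, using completeness of $(M,g)$, I fix a base point $x_{0}\in M$ and introduce the usual smooth cutoffs $\eta_{R}:M\to[0,1]$, supported in the geodesic ball $B(x_{0},2R)$, equal to $1$ on $B(x_{0},R)$, with $|\nabla\eta_{R}|\le C/R$. The key step is to multiply the Bochner identity by the weight $\eta_{R}^{2}|\tau(\psi)|^{q-2}$, matched to the $L^{q}$ hypothesis, to integrate over $M$, to integrate by parts once, and to drop the nonnegative extra term $2(q-2)\int\eta_{R}^{2}|\tau(\psi)|^{q-2}|\nabla|\tau(\psi)||^{2}\,dv_{g}$ that appears when the derivative hits the weight (this is exactly where the assumption $q\ge 2$ enters). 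Closing the remaining mixed term by Cauchy--Schwarz and Kato's inequality then yields the central weighted estimate
\begin{equation*}
\int_{M}\eta_{R}^{2}\,|\tau(\psi)|^{q-2}\,|\nabla^{\psi}\tau(\psi)|^{2}\,dv_{g}\;\le\;4\int_{M}|\nabla\eta_{R}|^{2}\,|\tau(\psi)|^{q}\,dv_{g}.
\end{equation*}

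Letting $R\to\infty$, the right-hand side is dominated by $(4C^{2}/R^{2})\int_{M}|\tau(\psi)|^{q}\,dv_{g}$, which tends to $0$ by the $L^{q}$ hypothesis. Consequently $|\tau(\psi)|^{q-2}|\nabla^{\psi}\tau(\psi)|^{2}\equiv 0$ on $M$. This forces $\nabla^{\psi}\tau(\psi)=0$ on the open set $\{|\tau(\psi)|>0\}$, while on the interior of $\{|\tau(\psi)|=0\}$ the vanishing is obvious; the union of these two open sets is open and dense in $M$, so continuity of $\nabla^{\psi}\tau(\psi)$ extends the identity to all of $M$, and in particular $|\tau(\psi)|$ is constant.

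The technical difficulty I anticipate is the integration by parts against the weight $|\tau(\psi)|^{q-2}$ when $q>2$, since that weight is only continuous and may vanish on the zero locus of $\tau(\psi)$. The standard remedy is to work throughout with the smooth approximant $(|\tau(\psi)|^{2}+\varepsilon)^{(q-2)/2}$ for $\varepsilon>0$, to carry out all manipulations in that regularized form, and to let $\varepsilon\downarrow 0$ by dominated convergence; the displayed estimate then survives the limit unchanged.
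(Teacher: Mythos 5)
Your argument is essentially the paper's own proof: the paper tests $\Delta^{\psi}\tau(\psi)=0$ directly against $\beta^{2}|\tau(\psi)|^{q-2}\tau(\psi)$ for a cutoff $\beta$, drops the same nonnegative $(q-2)$-term, and arrives via Young's inequality at exactly your weighted estimate $\int_{M}\beta^{2}|\tau(\psi)|^{q-2}|\nabla^{\psi}\tau(\psi)|^{2}\,dv_{g}\le 4\int_{M}|\operatorname{grad}\beta|^{2}|\tau(\psi)|^{q}\,dv_{g}$ before letting the radius tend to infinity. Your explicit $\varepsilon$-regularization of the weight $|\tau(\psi)|^{q-2}$ (relevant for $2<q<4$, where the paper's intermediate factor $|\tau(\psi)|^{q-4}$ is formally singular on the zero set of $\tau(\psi)$) is a point the paper passes over silently, but the substance and the conclusion are the same.
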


\begin{proof}
For a fixed point $x_0 \in M$, and for every $0 < r < \infty$, we take a cut-off function $\beta$ on $M$ satisfying
\[
\begin{cases}
0 \leq \beta(x) \leq 1, & x \in M, \\
\beta(x) = 1, & x \in B_r(x_0), \\
\beta(x) = 0, & x \not\in B_{2r}(x_0), \\
|\operatorname{grad}^{M}\beta| \leq \frac{C}{r}, & x \in M,
\end{cases}
\]
where $C$ is a constant independent of $r$\replaced{ and}{, where} $B_{r}(x_0)$\replaced{,}{and} $B_{2r}(x_0)$ are the balls centered at a fixed point $x_0 \in M$ with
radius $r$ and $2r$ respectively. \replaced{Then, we obtain that}{From \eqref{eq3.41}, we have}
\begin{equation}\label{7}
\int_M h(\Delta^\psi \tau(\psi), \beta^2 |\tau(\psi)|^{q - 2} \tau(\psi) ) dv_g
= 0
\end{equation}
\added{by \eqref{eq3.41}, and hence}
\deleted{By \eqref{7}, we have}
\begin{align}
0 &= \int_M h(\Delta^\psi \tau(\psi), \beta^2 |\tau(\psi)|^{q - 2} \tau(\psi) ) dv_g \nonumber\\
&= \int_M h( \nabla^\psi \tau(\psi), \nabla^\psi(\beta^2 |\tau(\psi)|^{q - 2} \tau(\psi)) ) dv_g\nonumber\\
  &= \int_M \sum_{i=1}^m h( \nabla_{e_{i}}^\psi \tau(\psi), (e_{i}\beta^{2})|\tau(\psi)|^{q - 2} \tau(\psi)+\beta^{2}
  e_{i}\{(|\tau(\psi)|^{2})^{q - 2})\}\tau(\psi)\nonumber\\
  &\quad+\beta^{2}|\tau(\psi)|^{q - 2}\nabla_{e_{i}}^\psi \tau(\psi)) dv_g\nonumber\\
  &= \int_M \sum_{i=1}^m h( \nabla_{e_{i}}^\psi \tau(\psi), 2\beta(e_{i}\beta)|\tau(\psi)|^{q - 2} \tau(\psi)dv_g\nonumber\\
  &\quad+\int_M \sum_{i=1}^m \beta^{2}(q - 2)|\tau(\psi)|^{q - 4}(h( \nabla_{e_{i}}^\psi \tau(\psi),\tau(\psi))^{2} dv_g\nonumber\\
 &\quad+\int_M \sum_{i=1}^m h( \nabla_{e_{i}}^\psi \tau(\psi),\beta^{2}|\tau(\psi)|^{q - 2}\nabla_{e_{i}}^\psi \tau(\psi)) dv_g \label{8}
  \end{align}
\replaced{by \eqref{7}. Since}{Since}
$$\beta^{2}(q - 2)|\tau(\psi)|^{q - 4}(h( \nabla_{e_{i}}^\psi \tau(\psi),\tau(\psi))^{2}\geq 0,$$
the equation \eqref{8} becomes
\begin{align}
0&\geq \int_M \sum_{i=1}^m h( \nabla_{e_{i}}^\psi \tau(\psi), 2\beta(e_{i}\beta)|\tau(\psi)|^{q - 2} \tau(\psi)dv_g\nonumber\\
 &\quad+\int_M \sum_{i=1}^m h( \nabla_{e_{i}}^\psi \tau(\psi),\beta^{2}|\tau(\psi)|^{q - 2}\nabla_{e_{i}}^\psi \tau(\psi)) dv_g \label{88}
  \end{align}
By using Young's inequality, that is,
\[
\pm 2 \langle V, W \rangle \leq \varepsilon |V|^2 + \frac{1}{\varepsilon} |W|^2,
\]
for all positive $\varepsilon$, we \replaced{have that}{find}
\begin{align}
&-2 \int_M \sum_{i=1}^m h( \nabla_{e_{i}}^\psi \tau(\psi), \beta(e_{i}\beta)|\tau(\psi)|^{q - 2} \tau(\psi)) dv_g\nonumber\\
&= -2 \int_M \sum_{i=1}^m h(  (e_{i}\beta)|\tau(\psi)|^{\frac{q}{2} - 1} \tau(\psi),\beta|\tau(\psi)|^{\frac{q}{2} - 1}\nabla_{e_{i}}^\psi \tau(\psi)) dv_g \nonumber\\
&\leq
2\int_M |\operatorname{grad}^{M}\beta|^2 |\tau(\psi)|^q dv_g + \frac{1}{2} \int_M \sum_{i=1}^m \beta^{2}|\tau(\psi)|^{q - 1}|\nabla_{e_{i}}^\psi \tau(\psi)|^{2}dv_g.\label{9}
\end{align}
Substituting \eqref{9} \added{into} \eqref{88}, \added{we get that} \deleted{this into \eqref{88}, we get}
\begin{align}
\int_M \sum_{i=1}^m \beta^{2}|\tau(\psi)|^{q - 2}|\nabla_{e_{i}}^\psi \tau(\psi)|^{2}dv_g&\leq 4\int_M |\operatorname{grad}^{M}\beta|^2 |\tau(\psi)|^q dv_g\nonumber\\
&\leq \int_M\frac{4C^2}{r^2}  |\tau(\varphi)|^q dv_g.\label{10}
\end{align}
The right hand side of \eqref{10} goes to zero as $r \to \infty$ because the integral on the right-hand side is finite by \added{the} assumption and the left
hand side of \eqref{10} goes to
\begin{equation*}
    \int_M \sum_{i=1}^m |\tau(\psi)|^{q - 2}|\nabla_{e_{i}}^\psi \tau(\psi)|^{2}dv_g
\end{equation*}
if $r \to \infty$, since $\beta=1$ on $B_r(x_0)$. Thus, we obtain \added{that}
\begin{equation*}
    \int_M \sum_{i=1}^m |\tau(\psi)|^{q - 2}|\nabla_{e_{i}}^\psi \tau(\psi)|^{2}dv_g=0,
\end{equation*}
\replaced{which implies that}{Therefore, we obtain}
$|\tau(\psi)|$ is constant and $\nabla^\psi _X \tau(\varphi) = 0$ for any vector field $X$ on $M$.
\end{proof}

\begin{thm}
Let $\psi: (M^{m},g)\longrightarrow (N^{n},h)$ be an $H$-tensional map from a non-compact complete Riemannian manifold $(M, g)$ into a Riemannian manifold $(N, h)$ and let $q$ be a real constant satisfying $2\leq q<\infty.$
\begin{itemize}
\item[(i)] If
\[
\int_M |\tau(\psi)|^q \ dv_g < \infty, \quad \text{and} \quad \int_M |d\psi|^2 \, dv_g < \infty,
\]
then $\psi$ is harmonic.
\item[(ii)] If $\operatorname{Vol}(M) = \infty$ and
\[
\int_M |\tau(\psi)|^q \, dv_g < \infty,
\]
then $\psi$ is harmonic.
\end{itemize}
\end{thm}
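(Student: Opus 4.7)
The plan is to bootstrap on Lemma~\ref{lem100}, whose hypothesis $\int_M|\tau(\psi)|^q\,dv_g<\infty$ is shared by both parts of the theorem. That lemma already delivers the two key outputs $|\tau(\psi)|=c$ (a nonnegative constant) and $\nabla^\psi_X\tau(\psi)=0$ for every $X\in\Gamma(TM)$. The theorem therefore reduces in both cases to proving $c=0$.

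Part (ii) is essentially immediate from the lemma: because $|\tau(\psi)|^q\equiv c^q$ is constant,
\begin{equation*}
c^q\operatorname{Vol}(M)=\int_M|\tau(\psi)|^q\,dv_g<\infty,
\end{equation*}
so the hypothesis $\operatorname{Vol}(M)=\infty$ forces $c=0$, i.e.\ $\tau(\psi)=0$.

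For part (i) the plan is to introduce the auxiliary vector field $V\in\Gamma(TM)$ determined by $g(V,X)=h(d\psi(X),\tau(\psi))$ for every $X\in\Gamma(TM)$. Using a normal orthonormal frame at a point and the parallelism $\nabla^\psi\tau(\psi)=0$, a short computation should yield the clean identity $\operatorname{div}(V)=|\tau(\psi)|^{2}=c^{2}$. I would then reuse the cut-off function $\beta$ of Lemma~\ref{lem100} and apply the divergence theorem to the compactly supported vector field $\beta^{2}V$ to get
\begin{equation*}
c^{2}\int_M\beta^{2}\,dv_g\;=\;-2\int_M\beta\,g(\operatorname{grad}^{M}\beta,V)\,dv_g.
\end{equation*}
Combining the pointwise Cauchy--Schwarz bound $|V|\le |d\psi|\,|\tau(\psi)|=c|d\psi|$ with Young's inequality $2ab\le\tfrac12a^{2}+2b^{2}$ applied to $a=c\beta$ and $b=|\operatorname{grad}^{M}\beta|\,|d\psi|$ should absorb $\tfrac{c^{2}}{2}\int_M\beta^{2}dv_g$ on the left, leaving
\begin{equation*}
\tfrac{c^{2}}{2}\int_M\beta^{2}\,dv_g\;\le\;2\int_M|\operatorname{grad}^{M}\beta|^{2}|d\psi|^{2}\,dv_g\;\le\;\frac{2C^{2}}{r^{2}}\int_{B_{2r}(x_{0})\setminus B_{r}(x_{0})}|d\psi|^{2}\,dv_g.
\end{equation*}
Sending $r\to\infty$, the right-hand side tends to $0$ by the assumption $\int_M|d\psi|^{2}dv_g<\infty$, whereas the left-hand side is bounded below by $\tfrac{c^{2}}{2}\operatorname{Vol}(B_r(x_{0}))$, which is positive for large $r$ and converges to $\tfrac{c^{2}}{2}\operatorname{Vol}(M)>0$. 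The only way to avoid contradiction is $c=0$, hence $\tau(\psi)=0$ and $\psi$ is harmonic.

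The conceptually delicate step is the choice of the test vector field $V$: once one spots that $\operatorname{div}(V)=|\tau(\psi)|^{2}$ under the parallelism supplied by Lemma~\ref{lem100}, the rest is a routine cut-off / Young's-inequality argument of the same flavor as the one already carried out in the proof of that lemma. The only other bookkeeping issue is ensuring that the Young split is chosen so that the $c^{2}\beta^{2}$ term genuinely absorbs into the left side rather than leaving an uncontrolled residue.
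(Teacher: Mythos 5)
Your proposal is correct, and both parts rest on the same foundation as the paper's proof, namely Lemma \ref{lem100}, which supplies $\nabla^{\psi}\tau(\psi)=0$ and $|\tau(\psi)|=c$ constant; part (ii) is then handled identically in both arguments. For part (i) the two proofs test essentially the same object --- you use the vector field $V$ dual to $X\mapsto h(d\psi(X),\tau(\psi))$, the paper uses the $1$-form $\theta(X)=|\tau(\psi)|^{\frac{q}{2}-1}h(d\psi(X),\tau(\psi))$, which differs from yours only by the constant factor $c^{\frac{q}{2}-1}$ --- and both hinge on the same divergence computation $\operatorname{div}(V)=|\tau(\psi)|^{2}$ (equivalently $-\delta\theta=|\tau(\psi)|^{\frac{q}{2}+1}$), valid because of the parallelism from the Lemma. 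Where you genuinely diverge is in how the integral identity is exploited: the paper verifies $\int_M|\theta|\,dv_g<\infty$ via Cauchy--Schwarz and then invokes Gaffney's theorem for complete manifolds to get $\int_M(-\delta\theta)\,dv_g=0$, hence $c=0$; you instead integrate $\operatorname{div}(\beta^{2}V)$ for the cut-off $\beta$, absorb the $\tfrac{1}{2}c^{2}\beta^{2}$ term by Young's inequality (legitimate, since $\int_M\beta^{2}dv_g\le\operatorname{Vol}(B_{2r})<\infty$), and let $r\to\infty$ to force $c^{2}\operatorname{Vol}(B_{r_0}(x_0))=0$. Your route is self-contained and avoids citing Gaffney, at the cost of repeating a cut-off estimate already carried out in the Lemma; the paper's route is shorter once Gaffney is granted. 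Both are valid, and your divergence identity, the bound $|V|\le c\,|d\psi|$, and the choice $\varepsilon=\tfrac12$ in Young's inequality all check out.
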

\begin{proof}
\replaced{By}{From} Lemma~\ref{lem100}, we have \added{that} $\nabla^\psi _X \tau(\varphi) = 0$ for any $X\in\mathfrak{X}(M)$ and $|\tau(\psi)|$ is constant.

\noindent $(i)$. Define a $1$-form $\theta$ on $M$ by
\[
\theta(X) := |\tau(\psi)|^{\frac{q}{2} - 1} h(d\psi(X), \tau(\psi)),\;\;\;X\in\mathfrak{X}(M).
\]
Since $\int_M |d\psi|^2 \, dv_g < \infty$ and $\int_M |\tau(\psi)|^q \, dv_g < \infty,$ we yield \added{that}
\begin{align}
\int_M |\theta|dv_g&=\int_M \Big (\sum_{i=1}^m|\theta(e_{i})|\Big )^2 dv_g\nonumber\\
&\leq \int_M|\tau(\psi)|^\frac{q}{2}|d\psi| dv_g\nonumber\\
&\leq \Big (\int_M |d\psi|^{2}dv_g\Big )^2 \Big (\int_M |\tau(\psi)|^{q}dv_g\Big )^2<\infty.\label{eq100}
\end{align}
Now, we have \added{that} $- \delta \theta= \sum_{i=1}^m \left( \nabla_{e_i} \theta\right)(e_i) =|\tau(\varphi)|^{\frac{q}{2} + 1}$ \added{by} \cite{maeta}. Since $|\tau(\psi)|$ is constant and $\int_M |\tau(\psi)|^q \, dv_g < \infty,$ the function $\delta \theta$ is integrable over $M$\replaced{, which implies, applying  Gaffney's theorem (for example, see  \cite{gaffney} and \cite{nakauchi}) for the $1$-form $\theta$, that}{. From this and \eqref{eq100}, we can apply Gaffney's theorem see \cite{gaffney} and \cite{nakauchi} for the $1$-form $\theta$. Thus we obtain}
\[
0 = \int_M (-\delta \theta) \, dv_g = \int_M |\tau(\psi)|^{\frac{q}{2} - 1} \, dv_g,
\]
\replaced{by \eqref{eq100}. Hence}{which implies} $|\tau(\psi)| = 0$, \replaced{and hence}{so} $\psi$ is harmonic.

\noindent $(ii)$.  If $\operatorname{Vol}(M) = \infty$ and $|\tau(\psi)| \ne 0$, then
\[
\int_M |\tau(\psi)|^q \, dv_g = |\tau(\psi)|^q \cdot \operatorname{Vol}(M) = \infty,
\]
contradicting the assumption. So $|\tau(\psi)| = 0$ and hence $\tau(\psi) = 0$, i.e., $\psi$ is harmonic.
\end{proof}
\section{\added{A new member of Riemannian's manifold folks:} $H$-tensional submanifolds}
\deleted{This section is devoted to study $H$-tensional maps in the context of Riemannian submanifolds.}
\begin{defn}
A (connected) Riemannian submanifold $\iota: (M^{m},g)\longrightarrow (N^{n},h)$ of a Riemannian manifold is \replaced{said to be}{called} $H$-tensional
if the isometric immersion $\iota$ is an $H$-tensional map.
\end{defn}
Clearly, minimal submanifolds are $H$-tensional. Let $\iota: (M^{m},g)\longrightarrow (N^{n},h)$ be a Riemannian submanifold of a Riemannian manifold. We shall denote by $B$, $A$, and $\Delta^{\bot}$ the second fundamental form, the shape operator and the Laplacian on the normal bundle of $M^{m}$, respectively.
\begin{thm}\label{Th1000}
A Riemannian submanifold $\iota: (M^{m},g)\longrightarrow (N^{n},h)$ of a Riemannian manifold is $H$-tensional if
and only if
\begin{equation}\label{eq12000}
\begin{cases}
\strut \Delta^{\bot}H+ \operatorname{Tr}(B(\cdot,A_{H}(\cdot)))
    =0,\\
\frac{m}{4}\operatorname{grad}(|H|^{2})+\operatorname{Tr}(A_{\nabla_{\cdot}^{\bot}H}\cdot)
+\frac{1}{2}\left[\operatorname{Tr} R^N(d\iota(\cdot), H)d\iota(\cdot) \right]^\top=0,
\end{cases}
\end{equation}
where $R^{N}$ is the curvature
operator of $(N, h)$.
\end{thm}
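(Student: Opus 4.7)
The plan is to start from the $HS$-tensional condition \eqref{eq3.41}, which reads $\Delta^{\iota}\tau(\iota)=0$, and translate it into geometric data of the immersion. For an isometric immersion one has $\tau(\iota)=mH$, so the $HS$-tensional condition becomes, up to the factor $m$, the rough-Laplacian equation $\Delta^{\iota}H=0$. The whole theorem is then the explicit decomposition of this single equation into its normal and tangential components on $M$, each of which must vanish separately.

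Next, I work at a fixed point $x\in M$ with a local orthonormal frame $\{e_{i}\}$ that is geodesic at $x$, so $\nabla^{M}_{e_{i}}e_{j}(x)=0$. The Weingarten formula gives $\nabla^{\iota}_{X}H=-A_{H}X+\nabla^{\bot}_{X}H$, and a second covariant differentiation produces four pieces: applying Gauss to the tangent field $-A_{H}X$ yields $-\nabla^{M}_{X}(A_{H}X)-B(X,A_{H}X)$, and applying Weingarten to the normal field $\nabla^{\bot}_{X}H$ yields $-A_{\nabla^{\bot}_{X}H}X+\nabla^{\bot}_{X}\nabla^{\bot}_{X}H$. Summing over $X=e_{i}$ and splitting into tangent and normal components then expresses $\Delta^{\iota}H$ as a tangent field on $M$ plus a section of the normal bundle.

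The normal part collects $-\Delta^{\bot}H-\operatorname{Tr}B(\cdot,A_{H}\cdot)$ and so directly yields the first equation in \eqref{eq12000}. For the tangent part the raw sum $\sum_{i}\nabla^{M}_{e_{i}}(A_{H}e_{i})+\sum_{i}A_{\nabla^{\bot}_{e_{i}}H}e_{i}$ must be reorganized. To produce both the curvature term $[\operatorname{Tr}R^{N}(d\iota(\cdot),H)d\iota(\cdot)]^{\top}$ and the gradient term $\tfrac{m}{4}\operatorname{grad}(|H|^{2})$, I invoke the Codazzi--Mainardi equation, which converts the divergence $\sum_{i}(\nabla_{e_{i}}A)_{H}(e_{i})$ into an expression containing additional $A_{\nabla^{\bot}_{\cdot}H}(\cdot)$ traces and a trace of $R^{N}(d\iota(\cdot),H)d\iota(\cdot)$ projected onto $TM$, together with the identity $h(\nabla^{\bot}_{X}H,H)=\tfrac{1}{2}X(|H|^{2})$, which is what manufactures the gradient of $|H|^{2}$.

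The main obstacle, I expect, lies in the tangential bookkeeping: carefully tracking signs through Codazzi--Mainardi, matching the precise coefficient $\tfrac{m}{4}$ once the $m$ from $\tau(\iota)=mH$ combines with the $\tfrac{1}{2}$ from the gradient identity, and isolating the curvature trace in its stated tangentially-projected form. The normal part is essentially automatic from Gauss and Weingarten alone; the tangential identity is where the second Bianchi-type input (Codazzi--Mainardi) enters and carries all the delicate coefficient calculations.
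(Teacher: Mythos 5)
Your proposal matches the paper's proof essentially step for step: both compute $\Delta^{\iota}\tau(\iota)=m\Delta^{\iota}H$ in a geodesic frame, split via the Gauss and Weingarten formulas into a normal part giving $\Delta^{\bot}H+\operatorname{Tr}B(\cdot,A_{H}\cdot)=0$ and a tangential part, and then rewrite $\sum_{i}\nabla_{e_i}A_H(e_i)$ via a Codazzi-based identity to produce the gradient and curvature terms. The only difference is cosmetic: the paper imports that tangential identity ready-made from \cite{dong}, whereas you propose to re-derive it from the Codazzi--Mainardi equation together with $h(\nabla^{\bot}_{X}H,H)=\tfrac{1}{2}X(|H|^{2})$.
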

\begin{proof}
Let $\{e_i\}_{1\leq i\leq m}$ be a local geodesic orthonormal frame  at point $x$ in $M$. Then, calculating at $x$, and using the Gauss and Weingarten formulas, we have \added{that}
\begin{gather}\label{eq1000}
% \nonumber to remove numbering (before each equation)
\begin{aligned}
  \Delta^{\iota} \tau(\iota)&= m\Delta^{\iota}H=-m\underset{i=1}{\overset{m}{\sum}}(\nabla^{\varphi}_{e_i}\nabla^{\varphi}_{e_i}H)=
- m\underset{i=1}{\overset{m}{\sum}}(\nabla^{\varphi}_{e_i}(-A_{H}e_i+\nabla_{e_i}^{\bot}H ))\\
 &=- m\underset{i=1}{\overset{m}{\sum}}(-\nabla_{e_i}A_{H}e_i- B(e_i,A_{H}e_i)-A_{\nabla_{e_i}^{\bot}H}e_i+
\nabla^{\bot}_{e_i}\nabla^{\bot}_{e_i}H)\\
 &=m[\Delta^{\bot}H+ \operatorname{Tr}(B(\cdot,A_{H}(\cdot)))]+m\sum_{i=1}^m [A_{\nabla_{e_i}^{\bot}H}e_i))+\nabla_{e_i} A_H(e_i)].
\end{aligned}
\end{gather}
Moreover, at $x$ \replaced{by}{one has} \cite{dong}\added{,  one has that }
\begin{equation}\label{eq1001}
    \sum_{i=1}^m\nabla_{e_i} A_H(e_i)=\frac{m}{2}\operatorname{grad}(|H|^{2})+\sum_{i=1}^m[A_{\nabla_{e_i}^{\bot}H}e_i+\left[R^N(d\iota(e_i), H)d\iota(e_i) \right]^\top]
\end{equation}
Substituting \eqref{eq1001} in \eqref{eq1000} and comparing the normal and the tangential
components \replaced{finishes the proof}{we obtain the theorem}.
\end{proof}
\begin{rem}\label{Rq4.101}
The first condition of \eqref{eq12000} implies immediately that every $H$-tensional submanifold is minimal if it has \added{a}
parallel mean curvature vector, i.e., $\nabla^{\bot}H = 0$.
\end{rem}
\begin{thm}\label{Th4.100}
A Riemannian submanifold $\iota: (M^{m},g)\longrightarrow (N^{n}(c),h)$ in a space of constant sectional curvature $c$ is
$H$-tensional if and only if
\begin{equation}\label{eq12 11}
\begin{cases}
\strut \Delta^{\bot}H+ \operatorname{Tr}(B(\cdot,A_{H}(\cdot)))
    =0,\\
\frac{m}{4}\operatorname{grad}(|H|^{2})+\operatorname{Tr}(A_{\nabla_{\cdot}^{\bot}H}\cdot)=0.
\end{cases}
\end{equation}
\end{thm}
\begin{proof}
We have \added{that}
\begin{align*}
\operatorname{Tr}_{g} R^N(d\iota(\cdot), H)d\iota(\cdot)&=\overset{m}{\underset{i=1}{\sum}}R^{N}(d\iota(e_{i}),H)d\iota(e_{i})\\
 &= -c\;\overset{m}{\underset{i=1}{\sum}}h(d\iota(e_{i}), d\iota(e_{i}))H\\
 &= -m\; c H
\end{align*}
Therefore
\begin{equation*}
    \left[\operatorname{Tr} R^N(d\iota(\cdot), H)d\iota(\cdot) \right]^\top=0.
\end{equation*}
Now, by Theorem~\ref{Th1000}, we conclude \added{the claim}.
\end{proof}
\begin{rem}\label{Rq4.55}
If we consider the $H$-tensional equations \eqref{eq12 11} in the Euclidean space, we recover Chen's notion of biharmonic submanifolds, so the
two definitions coincide.
\end{rem}
\begin{thm}\label{Th4.1}
A hypersurface $\iota: M^{m}\longrightarrow N^{m+1}$ with \added{the} mean curvature vector $H=\alpha e_{m+1}$\deleted{,} is $H$-tensional if and only if
\begin{equation}\label{eq12 2}
\begin{cases}
\strut \Delta \alpha+\alpha|A|^{2}
    =0,\\
\frac{m}{4}\operatorname{grad}\alpha^{2}+A(\operatorname{grad}\alpha)-\frac{\alpha}{2}(\operatorname{Ric}^{N}(e_{m+1}))^{\top}=0,
\end{cases}
\end{equation}
where $\operatorname{Ric}^{N}$ denotes the Ricci operator.
\end{thm}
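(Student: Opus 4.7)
The plan is to specialize the general $HS$-tensional characterization from Theorem \ref{Th1000} to codimension one, where the rank-one structure of the normal bundle trivializes most of the normal-bundle geometry. Writing $H = \alpha e_{m+1}$ with $e_{m+1}$ a local unit normal, the key observation is that $\nabla^{\bot}_X e_{m+1} = 0$ for every $X \in \Gamma(TM)$, since $g(\nabla^{\bot}_X e_{m+1}, e_{m+1}) = \tfrac{1}{2} X(1) = 0$ and the normal bundle is one-dimensional. From this single fact all subsequent simplifications follow.

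First I would process the first equation of \eqref{eq12000}. Using $\nabla^{\bot} e_{m+1} = 0$, one reads $\nabla^{\bot}_X H = X(\alpha)\, e_{m+1}$ and $\Delta^{\bot} H = (\Delta \alpha)\, e_{m+1}$. Since in codimension one $A_H = \alpha A$ and $B(X, Y) = g(AX, Y)\, e_{m+1}$, the second fundamental form trace becomes $\operatorname{Tr}(B(\cdot, A_H(\cdot))) = \alpha |A|^2 e_{m+1}$. Extracting the coefficient of $e_{m+1}$ yields $\Delta \alpha + \alpha |A|^2 = 0$.

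Next I would reduce the second equation of \eqref{eq12000}. Immediately $\operatorname{grad}|H|^2 = \operatorname{grad}\alpha^2$, and $\operatorname{Tr}(A_{\nabla^{\bot}_\cdot H}\cdot) = \sum_i A_{e_i(\alpha) e_{m+1}}(e_i) = A(\operatorname{grad}\alpha)$ follows directly from $\nabla^{\bot}_X H = X(\alpha)\, e_{m+1}$. For the curvature contribution, I would extend $\{e_i\}$ by $e_{m+1}$ to an orthonormal frame of $TN$ along $\iota(M)$; using $R^N(e_{m+1}, e_{m+1}) e_{m+1} = 0$ together with the antisymmetry of $R^N$ in its first two arguments, the trace $\sum_{i=1}^m R^N(d\iota(e_i), H) d\iota(e_i)$ rearranges into $-\alpha\, \operatorname{Ric}^N(e_{m+1})$. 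Taking tangential parts and assembling the three pieces yields the second equation of the statement.

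The main obstacle is merely bookkeeping: the sign conventions for the rough Laplacian, for $R^N$, and for $\operatorname{Ric}^N$ must be tracked consistently so that the reduced equations appear with the exact signs given in the theorem. Beyond that the proof is direct substitution into Theorem \ref{Th1000}, and no conceptually new ingredient is required.
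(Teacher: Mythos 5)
Your proposal is correct and follows essentially the same route as the paper: specialize Theorem \ref{Th1000} to codimension one by computing the four terms $\Delta^{\bot}H$, $\operatorname{Tr}(B(\cdot,A_{H}(\cdot)))$, $\operatorname{Tr}(A_{\nabla^{\bot}_{\cdot}H}\cdot)$ and the tangential Ricci term, then substituting. Your explicit justification via $\nabla^{\bot}_X e_{m+1}=0$ is exactly the (unstated) mechanism behind the paper's four displayed identities, and your sign $\Delta^{\bot}H=(\Delta\alpha)e_{m+1}$ is in fact the one consistent with the stated conclusion.
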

\begin{proof}
Sine $H=\alpha e_{m+1}$, we have \added{that}
\begin{align*}
\Delta^\perp H &= (\Delta \alpha)\, e_{m+1}, \\
\mathrm{Tr}\, B(A_H(\cdot), \cdot)& = \sum_{i=1}^m  B(A_H(e_i), e_i) = \alpha |A|^2 e_{m+1}, \\
\left( \mathrm{Trace}\, R^N(d\iota(\cdot), H) d\iota(\cdot) \right)^\top &= - \alpha \left( \mathrm{Ric}^N(e_{m+1}) \right)^\top, \\
\mathrm{Tr}\, A_{\nabla^\perp_{(\cdot)} H} (\cdot) &= \sum_{i=1}^m A_{\nabla^\perp_{e_i} H}(e_i) = A(\mathrm{grad}\, \alpha).
\end{align*}
Substituting these into the $H$-tensional equations \eqref{eq12000} yields the desired system.
\end{proof}
\begin{cor}\label{Th4.1000}
A hypersurface in an Einstein space $N^{m+1}$ with \added{the} mean curvature vector $H=\alpha e_{m+1}$ is $H$-tensional if
and only if
\begin{equation}\label{eq12 222}
\begin{cases}
\strut \Delta \alpha+\alpha|A|^{2}
    =0,\\
\frac{m}{4}\operatorname{grad}\alpha^{2}+A(\operatorname{grad}\alpha)=0.
\end{cases}
\end{equation}
\end{cor}
\begin{cor}\label{Th4.100}
A hypersurface $\iota: M^{m}\longrightarrow N^{m+1}(c)$ in a space of constant sectional curvature $c$ is
$H$-tensional if and only if
\begin{equation}\label{eq12 200}
\begin{cases}
\strut \Delta \alpha+\alpha|A|^{2}
    =0,\\
\frac{m}{4}\operatorname{grad}\alpha^{2}+A(\operatorname{grad}\alpha)=0.
\end{cases}
\end{equation}
\end{cor}
\section{Nonexistence results of nonminimal $H$-tensional submanifolds}
The goal of this section is to prove several nonexistence results of nonminimal $H$-tensional submanifolds.
\begin{thm}\label{Th45}
Let $\iota: (M^{m},g)\longrightarrow (N^{n},h)$ be a Riemannian submanifold \replaced{with}{such that} $|H|=$ constant. Then $M$ is $H$-tensional if and only if it is minimal.
\end{thm}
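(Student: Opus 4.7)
The plan is to verify the nontrivial direction: assuming $\iota$ is $HS$-tensional with $|H|$ constant, I will deduce $H\equiv 0$. The converse is immediate, since minimality gives $\tau(\iota)=mH=0$, so $\iota$ is harmonic and hence $HS$-tensional.

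The starting point is the first equation of the $HS$-tensional system in Theorem \ref{Th1000},
\[
\Delta^{\bot}H+\operatorname{Tr}(B(\cdot,A_{H}(\cdot)))=0.
\]
Taking the pointwise $h$-inner product with $H$ and invoking the shape-operator identity $h(B(X,Y),H)=g(A_{H}X,Y)$ converts the trace term into $\sum_{i}g(A_{H}e_{i},A_{H}e_{i})=|A_{H}|^{2}$, yielding the pointwise equality
\[
h(\Delta^{\bot}H,H)=-|A_{H}|^{2}.
\]

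Next I would extract a Bochner-type identity from the hypothesis $|H|=\mathrm{const}$. Differentiating $|H|^{2}=h(H,H)$ twice in a local geodesic orthonormal frame, and using the sign convention for the rough Laplacian fixed in Section~2, one obtains
\[
\frac{1}{2}\Delta|H|^{2}=h(\Delta^{\bot}H,H)-|\nabla^{\bot}H|^{2}.
\]
Since the left-hand side vanishes by hypothesis, this gives $h(\Delta^{\bot}H,H)=|\nabla^{\bot}H|^{2}$. Combining with the previous display produces the pointwise equality $|A_{H}|^{2}+|\nabla^{\bot}H|^{2}=0$, so $A_{H}\equiv 0$ (and incidentally $H$ is parallel in the normal bundle).

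To conclude, I would apply the trace identity $\operatorname{tr}(A_{H})=m|H|^{2}$, which follows directly from $H=\frac{1}{m}\sum_{i}B(e_{i},e_{i})$ together with the defining relation of $A_{H}$. Since $A_{H}$ vanishes identically, so does its trace, forcing $|H|^{2}\equiv 0$, i.e.\ $M$ is minimal. The argument is entirely pointwise, so no compactness or integration by parts is needed; the only bookkeeping point is keeping the sign conventions for $\Delta$ and $\Delta^{\bot}$ consistent when deriving the Bochner identity, which is the one step I would carry out most carefully.
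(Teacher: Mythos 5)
Your proof is correct and follows essentially the same route as the paper: a Weitzenb\"{o}ck/Bochner identity for $H$ combined with the constancy of $|H|$ and the $HS$-tensional equation, ending with $\operatorname{tr}A_{H}=m|H|^{2}=0$. The only minor difference is that the paper runs the Weitzenb\"{o}ck formula in the full pullback bundle (using $\Delta^{\iota}H=0$ to conclude $\nabla^{\iota}H=0$ and then invoking Remark~\ref{Rq4.101}), whereas you work entirely in the normal bundle and use only the normal component of the system of Theorem~\ref{Th1000}.
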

\begin{proof}
Assume that $M$ is $H$-tensional, by using the Weitzenb\"{o}ck formula, we obtain \added{that}
\begin{align}
  \frac{1}{2}\Delta|H|^{2}&= h(\Delta^{\iota}H,H)-|\nabla^{\iota}H|^{2},\nonumber\\
 &=-|\nabla^{\iota}H|^{2}.\label{eqn77}
\end{align}
\replaced{Now, by \eqref{eqn77} because of  $|H|=$ constant, we obtain that }{
If $|H|=$ constant. From \eqref{eqn77} it results that}
\begin{equation*}
    \nabla^{\iota}H=0.
\end{equation*}
Then, by Remark~\ref{Rq4.101}, we conclude that $M$ is minimal.
\end{proof}
\begin{thm}\label{Th4.55}
A compact Riemannian submanifold $\iota: (M^{m},g)\longrightarrow (N^{n},h)$ is $H$-tensional if and only if it is minimal.
\end{thm}
\begin{proof}
Assume that $M$ is $H$-tensional, by using the Weitzenb\"{o}ck formula \eqref{eqn77} and the divergence theorem, we obtain \added{that}
$\nabla^{\iota}H = 0$. But,
for all $X\in\Gamma(TM)$, we have \added{also that}
\begin{equation*}
    0=\nabla^{\iota}_{X}H=-A_{H}X+\nabla^{\bot}_{X}H.
\end{equation*}
Then, $\nabla^{\bot}_{X}H=0$ and \added{hence   $H = 0$ } by Remark \ref{Rq4.101}\deleted{we get $H = 0$}.
\end{proof}

\replaced{Corollary~\ref{Co1000} yields the following.}{From Corollary~\ref{Co1000}, we deduce}
\begin{thm}
There are no compact $H$-tensional submanifolds of the Euclidean space.
\end{thm}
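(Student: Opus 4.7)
The plan is to deduce the theorem as an immediate consequence of Theorem \ref{Th4.55} combined with the classical nonexistence of compact minimal submanifolds of positive dimension in Euclidean space. Concretely, I would suppose for contradiction that $\iota: (M^{m}, g) \longrightarrow (\mathbb{R}^{n}, <,>)$ is a compact $HS$-tensional submanifold with $m \geq 1$. By Theorem \ref{Th4.55}, $\iota$ must then be minimal, so $H = 0$, equivalently $\tau(\iota) = m H = 0$, meaning $\iota$ is in fact a harmonic map.

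Next, I would exploit the flatness of the target to decouple the harmonic map equation. Writing $\iota = (\iota^{1}, \ldots, \iota^{n})$ in the global Cartesian coordinates of $\mathbb{R}^{n}$, the coordinate vector fields $\partial/\partial x^{a}$ are parallel, so the equation $\tau(\iota) = 0$ reduces componentwise to $\Delta^{M} \iota^{a} = 0$ for every $a = 1, \ldots, n$, where $\Delta^{M}$ denotes the Laplace--Beltrami operator of $(M, g)$.

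Finally, I would invoke the maximum principle on the compact manifold $M$: each harmonic function $\iota^{a}$ is then constant, hence $\iota$ itself is a constant map. This contradicts $\iota$ being an immersion whenever $m \geq 1$, completing the argument.

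There is no substantial obstacle here: once Theorem \ref{Th4.55} is available, the remaining step is the standard maximum-principle argument. An entirely parallel route would bypass Theorem \ref{Th4.55} and instead appeal to the earlier corollary asserting that every $HS$-tensional map from a compact orientable Riemannian manifold into $(\mathbb{R}^{n}, <,>)$ is constant, supplemented by a passage to the orientable double cover when $M$ fails to be orientable; since an isometric immersion of positive dimension cannot be constant, the same contradiction arises.
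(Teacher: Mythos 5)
Your proposal is correct and follows essentially the same route as the paper: invoke Theorem \ref{Th4.55} to conclude the submanifold is minimal, then rule out compact minimal submanifolds of Euclidean space. The only difference is that the paper simply cites this last classical fact (Chen's book), whereas you prove it via the harmonicity of the coordinate functions and the maximum principle, which is a fine and standard way to supply that ingredient.
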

\begin{thm}
A pseudo-umbilical submanifold $M$ of dimension $m$ with $m\neq 4$ is an $H$-tensional submanifold if and only if it is minimal.
\end{thm}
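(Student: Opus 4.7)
The plan is to exploit the pseudo-umbilical hypothesis $A_{H}=|H|^{2}\operatorname{Id}_{TM}$ to reduce the tangential equation from Theorem \ref{Th1000} to a single scalar relation on $\operatorname{grad}|H|^{2}$. The converse direction is immediate, since a minimal submanifold has $\tau(\iota)=mH=0$, so all the substance lies in the direct implication.

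First I would compute $(\nabla_{X}A)(H,Y)$, treating $A$ as a tensor of type $\Gamma(T^{\perp}M)\times\Gamma(TM)\to\Gamma(TM)$. Expanding the covariant derivative along the varying section $H$ and using $A_{H}Y=|H|^{2}Y$, the two $|H|^{2}\nabla_{X}Y$ contributions cancel and one arrives at
\begin{equation*}
(\nabla_{X}A)(H,Y)=X(|H|^{2})\,Y-A_{\nabla^{\perp}_{X}H}Y .
\end{equation*}
Next I would invoke the Codazzi--Mainardi identity $(\nabla_{X}A)(H,Y)-(\nabla_{Y}A)(H,X)=-\bigl(R^{N}(X,Y)H\bigr)^{\top}$. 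Setting $Y=e_{i}$ in a local orthonormal frame $\{e_{i}\}$, pairing with $e_{i}$, and summing, the key algebraic inputs will be the trace identity $\operatorname{Tr} A_{\nabla^{\perp}_{X}H}=m\,h(H,\nabla^{\perp}_{X}H)=\tfrac{m}{2}X(|H|^{2})$ together with the curvature symmetry
\begin{equation*}
\sum_{i}h\bigl(R^{N}(X,e_{i})H,e_{i}\bigr)=h\Bigl(\bigl[\operatorname{Tr} R^{N}(d\iota(\cdot),H)d\iota(\cdot)\bigr]^{\top},X\Bigr).
\end{equation*}
Combining these, I expect to obtain the tangential identity
\begin{equation*}
\operatorname{Tr}(A_{\nabla^{\perp}_{\cdot}H}\cdot)+\bigl[\operatorname{Tr} R^{N}(d\iota(\cdot),H)d\iota(\cdot)\bigr]^{\top}=-\tfrac{m-2}{2}\operatorname{grad}|H|^{2}.
\end{equation*}

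Substituting this into the tangential $HS$-tensional condition of \eqref{eq12000}, the curvature term cancels exactly and the whole system collapses to
\begin{equation*}
\tfrac{4-m}{4}\operatorname{grad}|H|^{2}=0 .
\end{equation*}
The hypothesis $m\neq 4$ then forces $|H|$ to be constant, and Theorem \ref{Th45} immediately yields minimality.

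The main obstacle will be bookkeeping: the clean cancellation of the two curvature expressions depends on a consistent sign convention for the Codazzi--Mainardi equation, for the symmetries of $R^{N}$, and for the tangential trace operator appearing in Theorem \ref{Th1000}. Any sign or factor slip would leave a residual ambient-curvature term, and the sharp coefficient $(4-m)/4$ that isolates the exceptional dimension $m=4$ would fail to emerge. Once this linchpin is secured, the argument is almost purely algebraic.
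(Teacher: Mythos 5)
Your proposal is correct and follows essentially the same route as the paper: both reduce the tangential $HS$-tensional equation, via the pseudo-umbilical identity $A_{H}=|H|^{2}\operatorname{Id}$, to $\tfrac{4-m}{4}\operatorname{grad}|H|^{2}=0$ and then invoke Theorem \ref{Th45}. The only difference is that you re-derive the auxiliary identity from the Codazzi equation, whereas the paper quotes it directly as \eqref{eq1001} from \cite{dong}; your intermediate coefficient $-\tfrac{m-2}{2}=1-\tfrac{m}{2}$ agrees exactly with the paper's \eqref{eq1004}.
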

\begin{proof}
Let $\{e_i\}_{1\leq i\leq m}$ be a local geodesic orthonormal frame  at point $x$ in $M$. Calculating at $x$ and using \eqref{eq1001} \replaced{gives that }{we have}
\begin{equation}\label{eq1003}
    \sum_{i=1}^m[A_{\nabla_{e_i}^{\bot}H}e_i+\left[R^N(d\iota(e_i), H)d\iota(e_i) \right]^\top]=\sum_{i=1}^m\nabla_{e_i} A_H(e_i)-\frac{m}{2}\operatorname{grad}(|H|^{2}).
\end{equation}
Since $M$ is pseudo-umbilical, i.e., $M$ satisfying $A_{H}=|H|^{2}I$,  \added{the equation} \eqref{eq1003} becomes
\begin{equation}\label{eq1004}
    \sum_{i=1}^m[A_{\nabla_{e_i}^{\bot}H}e_i+\left[R^N(d\iota(e_i), H)d\iota(e_i) \right]^\top]=(1-\frac{m}{2})\operatorname{grad}(|H|^{2}).
\end{equation}
Now, replacing \eqref{eq1004} in the second equation of \eqref{eq12000}, we \replaced{obtain that}{get}
\begin{equation}\label{eq103}
    (4-m)\operatorname{grad}(|H|^{2})=0.
\end{equation}
Thus, \replaced{since}{for $m\neq 4$,} $|H|$ is constant\deleted{,} and \deleted{by}  Theorem~\ref{Th45} \added{valids   for $m\neq 4$,} we conclude that $M$ is minimal.
\end{proof}

\begin{thm}
A totally umbilical hypersurface in an Einstein space is $H$-tensional if and only if it is minimal.
\end{thm}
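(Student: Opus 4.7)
My plan is to apply the preceding hypersurface-in-an-Einstein-space characterization (the corollary built on system \eqref{eq12 2222}) and use the totally umbilical condition to reduce the second equation to a relation forcing $|H|$ to be constant; then Theorem \ref{Th45} finishes the job.

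More precisely, let $\iota:M^m\to N^{m+1}$ be totally umbilical in an Einstein space, with unit normal $e_{m+1}$ and $H=\alpha\,e_{m+1}$. Total umbilicity means $A=\alpha\,I$, so
\[
|A|^{2}=m\alpha^{2},\qquad A(\operatorname{grad}\alpha)=\alpha\,\operatorname{grad}\alpha,\qquad \operatorname{grad}\alpha^{2}=2\alpha\,\operatorname{grad}\alpha.
\]
Substituting into the second equation of the system \eqref{eq12 222} from the previous corollary, I get
\[
\frac{m}{4}\,(2\alpha\,\operatorname{grad}\alpha)+\alpha\,\operatorname{grad}\alpha=\frac{m+2}{2}\,\alpha\,\operatorname{grad}\alpha=0,
\]
which is exactly $\operatorname{grad}(\alpha^{2})=0$, i.e. $|H|^{2}=\alpha^{2}$ is constant on each connected component.

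Once $|H|$ is constant, Theorem \ref{Th45} immediately yields that $M$ is minimal, finishing the nontrivial direction. The converse is trivial: a minimal submanifold has $\tau(\iota)=mH=0$, so it is harmonic and therefore $HS$-tensional. I do not anticipate any serious obstacle: the only thing to check carefully is that the coefficient $\frac{m+2}{2}$ is indeed nonzero (true for all $m\geq 1$), so the reduction to $\operatorname{grad}\alpha^{2}=0$ is valid without an exceptional dimension as in the pseudo-umbilical theorem. Note that the first equation of \eqref{eq12 222} is not even needed for the argument, since Theorem \ref{Th45} absorbs the remaining work.
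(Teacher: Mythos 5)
Your proposal is correct and follows essentially the same route as the paper: both substitute the totally umbilical condition into the Einstein-space characterization \eqref{eq12 222}, reduce the second equation to $\frac{m+2}{2}\,\alpha\,\operatorname{grad}\alpha=0$ (the paper writes this as $(2+m)\operatorname{grad}\lambda^{2}=0$ with $\lambda=\alpha$), and conclude that $|H|$ is constant. The only cosmetic difference is the last step: the paper then feeds the constancy of $\alpha$ into the first equation $\Delta\alpha+m\alpha^{3}=0$ to get $\alpha=0$ directly, whereas you invoke Theorem \ref{Th45}; both are valid.
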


\begin{proof}
Let $\{e_{1},\cdots,e_{m},e_{m+1}\}$ be an orthonormal frame adapted to the
hypersurface $M$ so that $Ae_{i}=\lambda e_{i}$, where $\lambda$ is the common value of all principal normal curvatures at any point $x\in M$. Then
\begin{gather*}
\begin{aligned}
\alpha &= \frac{1}{m}\sum_{i=1}^m<Ae_{i},e_{i}>=\lambda,\;\; |A|^2 = m \lambda^2,\\
A(\operatorname{grad} \alpha) &= A\big(\sum_{i=1}^m e_{i}(\lambda)e_{i}\big)=\frac{1}{2}\operatorname{grad}\lambda^2.
\end{aligned}
\end{gather*}
The $H$-tensional equations \eqref{eq12 222} become
\begin{equation*}
\begin{cases}
\Delta \lambda + m\lambda^3
    =0,\\
(2+m)\operatorname{grad}\lambda^2= 0.
\end{cases}
\end{equation*}
\replaced{Hence}{So} $\lambda = 0$ \replaced{which gives that}{and hence}  $\alpha = 0$.
\end{proof}
\begin{cor}
Any totally umbilical $H$-tensional hypersurface in a Ricci flat manifold is minimal.
\end{cor}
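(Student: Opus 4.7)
The plan is to obtain this corollary as an immediate specialization of the preceding theorem. The crucial observation is that a Ricci flat manifold $(N,h)$ is automatically Einstein: by definition $\operatorname{Ric}^{N} = 0$, which trivially fits the Einstein condition $\operatorname{Ric}^{N} = \lambda\, h$ with $\lambda = 0$. Therefore the hypothesis "$N$ is an Einstein space" required by the preceding theorem is met verbatim for any Ricci flat ambient manifold.

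With that in place, I would invoke the preceding theorem directly: for a totally umbilical hypersurface $\iota: M^{m} \longrightarrow N^{m+1}$ inside a Ricci flat (hence Einstein) manifold, being $HS$-tensional is equivalent to being minimal. Since the hypersurface is $HS$-tensional by assumption, it must be minimal, which is the desired conclusion.

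No genuine obstacle arises, since the argument is essentially a one-line deduction. If one preferred a self-contained verification bypassing the intermediate Einstein case, an alternative route would be to go back to the general $HS$-tensional system for a hypersurface with $H = \alpha e_{m+1}$. There the ambient curvature enters through the term $-\alpha(\operatorname{Ric}^{N}(e_{m+1}))^{\top}$, which vanishes identically when $\operatorname{Ric}^{N}=0$. This reduces the tangential equation to $\frac{m}{4}\operatorname{grad}\alpha^{2}+A(\operatorname{grad}\alpha)=0$, i.e.\ exactly the system \eqref{eq12 222}, after which the argument in the proof of the preceding theorem applies unchanged and forces $\lambda = 0$, hence $\alpha = 0$.
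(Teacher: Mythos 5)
Your proposal is correct and matches the paper's intent exactly: the corollary is stated without proof precisely because a Ricci flat manifold is Einstein (with $\lambda=0$), so the preceding theorem applies verbatim. Your alternative self-contained check via the vanishing of $-\alpha(\operatorname{Ric}^{N}(e_{m+1}))^{\top}$ is also sound but adds nothing beyond the one-line deduction.
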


\added{In the following, we completely follow  \cite{dimitric} and \cite{balmus}.}
\begin{thm}
Let $M$ be a hypersurface with at most two distinct principal curvatures in $N^{m+1}(c)$. Then $M$ is an $H$-tensional submanifold if and only if it is minimal.
\end{thm}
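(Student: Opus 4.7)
\medskip

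\noindent\textbf{Proof plan.} The converse (minimal implies $HS$-tensional) is immediate from Definition \ref{def1.11}. For the direct implication, I would use the $HS$-tensional system from the preceding corollary, writing $H=\alpha e_{m+1}$:
\begin{equation*}
\Delta\alpha+\alpha|A|^{2}=0,\qquad \frac{m}{4}\operatorname{grad}\alpha^{2}+A(\operatorname{grad}\alpha)=0.
\end{equation*}
The second equation is equivalent to $A(\operatorname{grad}\alpha)=-\frac{m\alpha}{2}\operatorname{grad}\alpha$, so wherever $\operatorname{grad}\alpha\neq 0$ the value $-m\alpha/2$ is a principal curvature of $M$. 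The overall strategy is to deduce that $|H|=|\alpha|$ is constant on $M$ and then invoke Theorem \ref{Th45} to conclude that $M$ is minimal.

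Denote the (at most) two distinct principal curvatures by $\lambda_{1},\lambda_{2}$ of multiplicities $m_{1},m_{2}$ with $m_{1}+m_{2}=m$. First I would dispose of the totally umbilical case $m_{2}=0$: a space form of constant curvature $c$ is Einstein, hence the preceding theorem on totally umbilical hypersurfaces in Einstein spaces applies and yields minimality directly. Assume then that $m_{1},m_{2}\geq 1$ and argue by contradiction, working on the open set $U=\{x\in M:\operatorname{grad}\alpha(x)\neq 0\}$, which is nonempty unless $|\alpha|$ is already constant (in which case Theorem \ref{Th45} finishes the proof). On $U$, we may label so that $\lambda_{1}=-\frac{m\alpha}{2}$. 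The trace identity $m\alpha=m_{1}\lambda_{1}+m_{2}\lambda_{2}$ then forces
\begin{equation*}
\lambda_{2}=\frac{m(m_{1}+2)}{2m_{2}}\,\alpha,
\end{equation*}
so that $|A|^{2}=m_{1}\lambda_{1}^{2}+m_{2}\lambda_{2}^{2}=K\alpha^{2}$ for a positive constant $K=K(m,m_{1})$, and the first equation of the system becomes the elliptic identity $\Delta\alpha+K\alpha^{3}=0$ on $U$.

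Next, since $\operatorname{grad}\alpha\in T_{\lambda_{1}}$ and the eigenspaces $T_{\lambda_{1}},T_{\lambda_{2}}$ are orthogonal, $\alpha$ is constant along the distribution $T_{\lambda_{2}}$. To rule out the existence of $U$, I would bring in the Codazzi equation of the hypersurface in $N^{m+1}(c)$, namely $(\nabla_{X}A)Y=(\nabla_{Y}A)X$, testing it with $X\in T_{\lambda_{1}}$ and $Y\in T_{\lambda_{2}}$. Expanding and projecting onto the two eigenspaces yields, on one hand, that $\lambda_{i}$ is constant along the leaves of $T_{\lambda_{3-i}}$ (consistent with what we already know about $\alpha$), and on the other hand an algebraic identity linking $\operatorname{grad}\alpha$ to the mean curvatures of the eigenspace distributions. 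Combined with $\lambda_{2}=\frac{m(m_{1}+2)}{2m_{2}}\alpha$ and $|A|^{2}=K\alpha^{2}$, this relation propagates into a first-order differential constraint on $\alpha$ that is incompatible with $\operatorname{grad}\alpha\neq 0$ unless $\alpha$ is locally constant, contradicting the definition of $U$. Hence $|H|$ is constant on $M$ and Theorem \ref{Th45} yields that $M$ is minimal.

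\medskip

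\noindent\textbf{Main obstacle.} The technical core is the last step: after reducing the system to $\Delta\alpha+K\alpha^{3}=0$ together with the two-eigenvalue structure, one must extract from the Codazzi equation exactly the right differential identity to collapse $\operatorname{grad}\alpha$ to zero. Choosing an adapted orthonormal frame diagonalising $A$ with the first vector aligned with $\operatorname{grad}\alpha/|\operatorname{grad}\alpha|$, and carefully bookkeeping the connection coefficients between the two eigenspaces, is where the classical local structure theory of hypersurfaces with two distinct principal curvatures in space forms (after Cartan) streamlines the calculation.
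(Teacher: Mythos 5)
Your overall architecture matches the paper's (which follows Dimitri\v{c}): reduce to the system $\Delta\alpha+\alpha|A|^{2}=0$, $\frac{m}{4}\operatorname{grad}\alpha^{2}+A(\operatorname{grad}\alpha)=0$, observe that $\operatorname{grad}\alpha$ is a principal direction with eigenvalue $-\frac{m}{2}\alpha$ on $U=\{\operatorname{grad}\alpha\neq 0\}$, use the trace identity to pin down the other principal curvature and $|A|^{2}$ as multiples of $\alpha$ and $\alpha^{2}$, and then derive a contradiction on $U$ via an adapted frame and the Codazzi equations. Your preliminary reductions (the umbilical case, the eigenvalue $\lambda_{2}=\frac{m(m_{1}+2)}{2m_{2}}\alpha$, $\alpha$ constant along the second eigendistribution) are all correct and agree with the paper, which moreover disposes of the case where $-\frac{m}{2}\alpha$ has multiplicity at least two immediately from Codazzi.

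However, there is a genuine gap at exactly the point you flag as the ``main obstacle,'' and the mechanism you propose for closing it is not the right one. You assert that the Codazzi equation, combined with the algebraic relations, ``propagates into a first-order differential constraint on $\alpha$ that is incompatible with $\operatorname{grad}\alpha\neq 0$.'' Codazzi alone does not produce such a constraint: it only yields relations among the connection forms and $\alpha'=e_{1}(\alpha)$, namely $\omega_{1}^{k}(e_{1})=0$ (the integral curves of $e_{1}$ are geodesics) and $-(m+2)\alpha\,\omega_{1}^{k}=3\alpha'\,\omega^{k}$, which are perfectly consistent with nonconstant $\alpha$. The actual contradiction in the paper requires three further inputs that your plan omits: (i) converting the first $HS$-tensional equation into a second-order ODE $\alpha''-\frac{3(m-1)}{(m+2)\alpha}(\alpha')^{2}-\frac{m^{2}(m+8)}{4(m-1)}\alpha^{3}=0$ along the $e_{1}$-geodesics; (ii) taking the exterior derivative of the Codazzi relation and substituting the Cartan structure equations --- in particular the Gauss equation, which is where the curvature $c$ enters --- to obtain a second, independent second-order ODE for $\alpha$; and (iii) eliminating $\alpha''$ and $(\alpha')^{2}$ between the two (integrating once when $m\neq 4$) to arrive at a nontrivial polynomial identity in $\alpha$, which forces $\alpha$ to be locally constant and contradicts the definition of $U$. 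Without carrying out this elimination the proof is incomplete, and as stated your sketch suggests a shortcut (Codazzi alone) that would fail.
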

\begin{proof}
	\replaced{Assume that $M$ is an $H$-tensional hypersurface with at most two distinct principal curvatures in $N^{m+1}(c)$ which is not harmonic, i.e., the system \eqref{eq12 200} occurs. T}{We proceed as in \cite{dimitric} and \cite{balmus}. Assume that $M$ is $H$-tensional hypersurface with at most two distinct principal curvatures in $N^{m+1}(c)$ which is not harmonic i.e. the system \eqref{eq12 200} occurs, t}hen $H = \alpha e_{m+1}$ for some nonzero function $\alpha \in C^\infty(M)$ and some unit normal vector field $e_{m+1}$. Without loss of generality, we may assume that $\alpha > 0$. Let $U$ be an open set of $M$ defined by $U = \{ p \in M \mid (\operatorname{grad} \alpha^2)(p) \neq 0 \}$. We will prove that $U$ is empty. To do so, we assume that $U$ is nonempty. Let $\{e_i\}_{1\leq i\leq m}$ be the basis of principal directions on $U$ and $\{\omega_i\}_{1\leq i\leq m}$ its dual frame field so that $e_1 = \frac{\operatorname{grad} \alpha}{|\operatorname{grad} \alpha|}$. Then $e_1$ is a principal direction with associated principal curvature:
\[
\lambda_1 = -\dfrac{m}{2} \alpha.
\]
Now denote by $\omega^j_i$ to the connection $1$-forms given by $\nabla e_i = \omega^j_i e_j$. From the Codazzi equations for $M$ we get, for distinct $i, j, k = 1, \dots, m$, that
\begin{align}
e_j(\lambda_i) &= (\lambda_j - \lambda_i) \omega_j^i(e_i), \label{4.300} \\
(\lambda_j - \lambda_k) \omega_j^k(e_i) &= (\lambda_i - \lambda_k) \omega_i^k(e_j), \label{4.400}
\end{align}
where $\lambda_{i}$'s are principal curvatures. \added{Now, we have that}
\deleted{We have}
\begin{equation*}
    e_j(\alpha)=h(\operatorname{grad} \alpha,e_{j})=0,\;\;\;j=\overline{2,m},
\end{equation*}
\replaced{which implies that}{thus}
\begin{equation*}
    \operatorname{grad} \alpha=e_1(\alpha)e_1.
\end{equation*}
{\it Case 1.} If the multiplicity of $\lambda_{1}$ is at least $2$, i.e., if $\lambda_{i}=\lambda_{1}$ for some $i\geq 2$, then $e_1(\alpha)=0$. That follows
from \eqref{4.300} by considering $j = 1$ . This leads to $\operatorname{grad} \alpha=0$, a contradiction, so $U$ is empty and $\alpha$ is constant. Then,
by Theorem \ref{Th45} we conclude that $M$ is minimal.

\noindent {\it Case 2.} If the multiplicity of $\lambda_{1}$ is one, then as there are at most two distinct principal curvatures and \replaced{hence we obtain that }{since $\operatorname{Tr}A=m\alpha$ we have}
\begin{equation}\label{eqs101}
    \lambda_1 = -\dfrac{m}{2} \alpha, \;\;\lambda_2 =\lambda_3 =\cdots=\lambda_m = -\dfrac{3m}{2(m-1)} \alpha
\end{equation}
\added{since $\operatorname{Tr}A=m\alpha$.}
Therefore
\begin{equation}\label{eqs102}
    |A|^2=\lambda_1^{2}+\overset{m}{\underset{i=2}{\sum}}\lambda_i^{2} = \dfrac{m^{2}(m+8)}{4(m-1)} \alpha^{2}.
\end{equation}
\replaced{Now, s}{S}ince $\lambda_1\neq\lambda_j$ and $e_{j}(\alpha)=0$ for $j\geq 2$, we get from \eqref{4.300} \added{that}
\begin{equation}\label{eqs103}
\omega_1^j(e_1) = 0, \quad \text{for all}\; j = \overline{1,m},
\end{equation}
\replaced{i.e.,}{that is} $\nabla_{e_1}e_1=0$ which means that the integral curves of $e_{1}$ are geodesics on $U$. For $j=1$ and $i\geq 2$ \eqref{4.300}, gives
\begin{equation}\label{eqs104}
3 e_1(\alpha) = - (m + 2) \alpha \, \omega_1^i(e_i).
\end{equation}
For $i = 1$ and $j,k\geq 2$ with $j\neq k$, equation \eqref{4.400} leads to
\begin{equation}\label{eqs105}
\omega_1^k(e_j) = 0.
\end{equation}
From \eqref{eqs103}, \eqref{eqs104} and \eqref{eqs105}, we deduce that for $k\geq2$
\begin{equation}\label{eqs106}
-(m + 2)\alpha\omega_1^k= 3\alpha' \omega^k,
\end{equation}
where $'$ denotes derivative with respect to $e_{1}$. By using \eqref{eqs104}, the Laplacian of $\alpha$ can be computed as following
\begin{align}\label{eqs107}
\Delta \alpha &= \overset{m}{\underset{i=1}{\sum}}\left[(\nabla_{e_{i}}e_{i})\alpha-e_{i}(e_{i}(\alpha))\right]\notag\\
&= \overset{m}{\underset{i=1}{\sum}}\Big[\overset{m}{\underset{k=1}{\sum}}\omega_i^k(e_{i})(e_{k}(\alpha))-e_{i}(e_{i}(\alpha))\Big] \notag \\
&= \Big[\overset{m}{\underset{i=1}{\sum}}\omega_i^1(e_{i})\Big]\alpha'-\alpha'' \notag \\
&= \frac{3(m-1)}{(m + 2)\alpha}(\alpha')^{2}-\alpha''.
\end{align}
Thus, from \eqref{eqs102} and \eqref{eqs107}, the first equation of \eqref{eq12 200} becomes
\begin{equation}\label{eqss107}
\alpha''-\frac{3(m-1)}{(m + 2)\alpha}(\alpha')^{2}-\dfrac{m^{2}(m+8)}{4(m-1)} \alpha^{3}=0.
\end{equation}
On other hand, differentiating \eqref{eqs106}, we obtain \added{that}
\begin{equation}\label{eq1077}
 - (m + 2) \left(d\alpha \wedge \omega_1^k + \alpha\, d\omega_1^k\right)
= 3\, d\alpha' \wedge \omega^k + 3 \alpha'\, d\omega^k.
\end{equation}
The Cartan structural equations for $M$ are:
\begin{align}\label{eqs107}
d\omega_1^k = - \sum_{j=1}^m \omega_1^j \wedge \omega_k^j - (\lambda_1 \lambda_2 + c) \omega^1 \wedge \omega^k,
\end{align}
and
\begin{equation}\label{eq107}
d\omega^k=-\overset{m}{\underset{j=1}{\sum}} \omega^k_j \wedge \omega^j.
\end{equation}
Thus, from \eqref{eqs101}, \added{the equation} \eqref{eqs107} becomes
\begin{equation}\label{eqs108}
d\omega_1^k= - \sum_{j=1}^m \omega_1^j \wedge \omega_k^j+\Big(\frac{3m^2}{4(m - 1)} \alpha^2 - c\Big)\omega^1 \wedge \omega^k
\end{equation}
and, from \eqref{eqs106}, \added{the equation} \eqref{eq107} becomes
\begin{equation}\label{eq10777}
d\omega^k=\frac{3\alpha'}{(m +1)\alpha}\omega^k \wedge\omega^1 +\overset{m}{\underset{j=2}{\sum}} \omega^j_k \wedge \omega^j.
\end{equation}
Therefore
\begin{equation}\label{eqs109}
d\omega_1^k(e_1, e_k) = \frac{3m^2}{4(m - 1)} \alpha^2 - c,
\end{equation}
and
\begin{equation}\label{eqs110}
d\omega^k(e_1, e_k)=-\frac{3\alpha'}{(m +1)\alpha}.
\end{equation}
Moreover, we \replaced{obtain that}{yield}
\begin{equation}\label{eqs111}
d\alpha'\wedge\omega^k(e_1, e_k)=\alpha'',
\end{equation}
and
\begin{equation}\label{eqs112}
d\alpha\wedge\omega_1^k(e_1, e_k)=-\frac{3(\alpha')^{2}}{(m +1)\alpha}.
\end{equation}
Now, evaluating \eqref{eq1077} at $(e_1, e_k)$ and using equations \eqref{eqs109}-\eqref{eqs112}, we find \added{that}
\begin{equation}\label{eqs113}
\alpha'' - \frac{(m + 5)}{(m + 2)} \frac{(\alpha')^2}{\alpha}
+ \frac{m^2(m + 2)}{4(m - 1)} \alpha^3 - \frac{(m + 2)}{3} c\; \alpha = 0.
\end{equation}
Eliminating $\frac{(\alpha')^2}{\alpha}$ from \eqref{eqss107} and \eqref{eqs113} we find
\begin{equation}\label{eqss114}
(4-m)\alpha''=\frac{m^2(m^{2}+4m+17)}{4(m - 1)} \alpha^3 -\frac{(m^{2} + m-2) c}{2} \alpha .
\end{equation}

If $m=4$, \replaced{then}{from} \eqref{eqss114} \replaced{gives}{it follows} that $\alpha$ is constant, a contradiction\replaced{. Hence}{, so} $U$ is empty and $\alpha$ is constant. Then,
by Theorem~\ref{Th45}, we conclude that $M$ is minimal.

If $m\neq4$, multiply equation \eqref{eqss114} by $\alpha'$, integrate the result and obtain
\begin{equation}\label{eqs115}
(\alpha')^2=\frac{m^2(m^{2} + 4m+17)}{8(m - 1)(4-m)} \alpha^4
-\frac{(m^{2} + m-2) }{2(4-m)}c\; \alpha^{2}+ c_{0}.
\end{equation}
Eliminating $\alpha''$ from \eqref{eqss107} and \eqref{eqs113} we find
\begin{equation}\label{eqs116}
(\alpha')^2=\frac{m^2(m+5)(m+2)}{4(m - 1)(4-m)} \alpha^4
-\frac{(m+2)^{2} }{3(4-m)}c\; \alpha^{2}.
\end{equation}
We subtract \eqref{eqs116} from \eqref{eqs115}, we yield
\begin{equation}\label{eqs117}
\frac{m^2(m^{2}+10 m+3)}{8(m - 1)(m-4)} \alpha^4
+\frac{m^{2}-5 m-14}{6(m-4)}c\; \alpha^{2}+ c_{0}=0,
\end{equation}
so, by \eqref{eqs117}, it follows that $\alpha$ is constant, a contradiction, so $U$ is empty and $\alpha$ is constant. Then,
by Theorem \ref{Th45} we conclude that $M$ is minimal.
\end{proof}
\begin{cor}
Let $M$ be a surface of $N^{3}(c)$. Then $M$ is an $H$-tensional submanifold if and only if it is minimal.
\end{cor}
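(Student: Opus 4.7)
The plan is to deduce this corollary as a direct specialization of the preceding theorem, namely the statement that any hypersurface of $N^{m+1}(c)$ with at most two distinct principal curvatures is $HS$-tensional if and only if it is minimal. The key observation is that a surface $M$ in $N^{3}(c)$ is precisely a hypersurface of dimension $m=2$, and the shape operator $A$ of $M$ at any point is a symmetric endomorphism of a $2$-dimensional tangent space. Such an endomorphism has at most two eigenvalues, so $M$ automatically has at most two distinct principal curvatures at every point of $M$.

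With this observation in hand, the hypotheses of the preceding theorem are satisfied with $m = 2$ and ambient dimension $m+1 = 3$. Therefore the conclusion of that theorem applies verbatim: $M$ is $HS$-tensional if and only if it is minimal. No further computation is required, since the heavy analytic work (splitting into the cases where the multiplicity of $\lambda_{1}$ is $\geq 2$ versus exactly $1$, exploiting the Codazzi equations, and deriving the contradictions via the ODE for $\alpha$) has already been carried out in the proof of the previous theorem.

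Strictly speaking there is nothing to obstruct the argument; the only point that deserves a brief remark is the trivial but essential fact that the hypothesis ``at most two distinct principal curvatures'' is automatic in dimension two, which is what allows the reduction to work without any extra hypothesis on $M$. In summary, the proof reduces to the single line: apply the previous theorem with $m=2$.
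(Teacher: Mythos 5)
Your proposal is correct and matches the paper's intent exactly: the corollary is stated without a separate proof precisely because a surface in $N^{3}(c)$ is a hypersurface whose shape operator acts on a two-dimensional tangent space and hence automatically has at most two distinct principal curvatures, so the preceding theorem applies with $m=2$. Nothing further is needed.
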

\begin{thm}
Let $\iota: (M^{m},g)\longrightarrow (N^{n},h)$ be an $H$-tensional non-compact complete Riemannian submanifold and let $q$ be a real constant satisfying $2\leq q<\infty.$ If
$$\int_{M}|H|^{q}v_{g}<\infty,$$
then $M$ is minimal.
\end{thm}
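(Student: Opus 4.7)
The plan is to reduce this statement directly to Lemma \ref{lem100} applied with $\psi = \iota$, exploiting the standard identity $\tau(\iota) = m H$ valid for any isometric immersion. Under this identity, $|\tau(\iota)| = m|H|$, so the hypothesis
\[
\int_M |H|^q \, dv_g < \infty
\]
translates into $\int_M |\tau(\iota)|^q \, dv_g = m^q \int_M |H|^q \, dv_g < \infty,$ and the $HS$-tensional assumption on $\iota$ together with the completeness and non-compactness of $(M,g)$ supply the remaining hypotheses of Lemma \ref{lem100}.

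Applying Lemma \ref{lem100} then yields two conclusions: first, $|\tau(\iota)|$, and hence $|H|$, is constant on $M$; second, $\nabla^{\iota}_X \tau(\iota) = 0$ for every $X \in \Gamma(TM)$, equivalently $\nabla^{\iota}_X H = 0$. From here there are two equivalent ways to finish. The cleanest is to invoke Theorem \ref{Th45}: since $|H|$ is constant and $\iota$ is $HS$-tensional by hypothesis, $M$ must be minimal.

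Alternatively, one can bypass Theorem \ref{Th45} and use the Weingarten decomposition $\nabla^{\iota}_X H = -A_H X + \nabla^{\bot}_X H$; the vanishing of $\nabla^{\iota} H$ forces in particular $\nabla^{\bot} H = 0$, so $H$ is parallel in the normal bundle, and then Remark \ref{Rq4.101} gives $H = 0$.

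There is no real obstacle in this proof: the content is already packaged in Lemma \ref{lem100}. The only small point worth stating explicitly is the passage $\tau(\iota) = mH$, which is what allows the integrability hypothesis on $|H|$ to trigger the lemma, and the observation that $\iota$ being $HS$-tensional is precisely the hypothesis $\Delta^{\iota} \tau(\iota) = 0$ needed in the lemma's proof (see the use of equation \eqref{eq3.41} there). Thus the theorem is essentially a specialization of the previous result for general $HS$-tensional maps to the submanifold setting.
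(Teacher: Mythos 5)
Your proposal is correct and follows essentially the same route as the paper: apply Lemma \ref{lem100} to $\psi=\iota$ (using $\tau(\iota)=mH$) to conclude that $|H|$ is constant and $\nabla^{\iota}H=0$, then finish with Theorem \ref{Th45}. Your alternative ending via the Weingarten decomposition and Remark \ref{Rq4.101} is also valid, but it is just an unpacking of the same mechanism.
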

\begin{proof}
From Lemma \ref{lem100}, we have $\nabla^\iota _X H = 0$ for any $X\in\Gamma(TM)$ and $|H|$ is constant. By using Theorem \ref{Th45} we deduce that $M$ is minimal.
\end{proof}
\subsection{$H$-tensional curves}
In this last section, we focus on the simplest case of $H$-tensional maps.
\begin{thm}
Let $\gamma:I\longrightarrow(M^{m},g)$ be a regular curve parametrized by arc
length in a Riemannian manifold $(M^{m},g)$. Then, $\gamma$ is $H$-tensional curve if and only if it is a geodesic.
\end{thm}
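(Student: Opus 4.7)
The plan is to exploit the $1$-dimensionality of the domain: the rough Laplacian along a curve collapses to a second derivative. Taking the standard unit vector field $\partial_t$ on $I$ (for which $\nabla_{\partial_t}\partial_t=0$), the definition of $\Delta^{\gamma}$ in the preliminaries gives
\begin{equation*}
\Delta^{\gamma}\xi=-\nabla^{\gamma}_{\partial_t}\nabla^{\gamma}_{\partial_t}\xi
\end{equation*}
for any section $\xi$ of $\gamma^{-1}TM$. In particular, since $\tau(\gamma)=\nabla_{\gamma'}\gamma'$, Theorem~3.2 tells us that $\gamma$ is $HS$-tensional if and only if
\begin{equation*}
\nabla_{\gamma'}\nabla_{\gamma'}V=0,\qquad V:=\nabla_{\gamma'}\gamma'.
\end{equation*}
The $(\Leftarrow)$ direction is then immediate: a geodesic satisfies $V=0$, so trivially $\nabla_{\gamma'}\nabla_{\gamma'}V=0$.

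For the nontrivial direction, the plan is to combine this third-order equation with the unit-speed constraint $|\gamma'|^2=1$. Differentiating the latter along $\gamma$ gives the standard orthogonality
\begin{equation*}
\langle V,\gamma'\rangle=0.
\end{equation*}
I will then apply $\nabla_{\gamma'}$ twice to this identity. The first application yields
\begin{equation*}
\langle\nabla_{\gamma'}V,\gamma'\rangle=-|V|^{2}.
\end{equation*}
Applying $\nabla_{\gamma'}$ a second time and using the hypothesis $\nabla_{\gamma'}\nabla_{\gamma'}V=0$ together with $\nabla_{\gamma'}\gamma'=V$, the left side reduces to $2\langle\nabla_{\gamma'}V,V\rangle$, while the right side equals $-(|V|^{2})'=-2\langle\nabla_{\gamma'}V,V\rangle$. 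Comparing gives $\langle\nabla_{\gamma'}V,V\rangle=0$, so $|V|^{2}$ is constant along $\gamma$.

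The finish is then immediate. Since $|V|^{2}$ is constant, $(|V|^{2})''=0$, but a direct computation shows
\begin{equation*}
(|V|^{2})''=2\langle\nabla_{\gamma'}\nabla_{\gamma'}V,V\rangle+2|\nabla_{\gamma'}V|^{2}=2|\nabla_{\gamma'}V|^{2},
\end{equation*}
hence $\nabla_{\gamma'}V=0$. Plugging back into $\langle\nabla_{\gamma'}V,\gamma'\rangle=-|V|^{2}$ forces $|V|=0$, i.e.\ $\tau(\gamma)=0$, so $\gamma$ is a geodesic. The only subtle point is to remember to use $\nabla_{\gamma'}\gamma'=V$ (rather than $0$) when differentiating the relation $\langle\nabla_{\gamma'}V,\gamma'\rangle=-|V|^{2}$; otherwise the key identity forcing $|V|^{2}$ to be constant is missed.
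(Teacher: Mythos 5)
Your proof is correct, but it follows a genuinely different route from the paper's. The paper introduces the Frenet frame $\{F_i\}$ along $\gamma$, expands $\Delta^{\gamma}\tau(\gamma)=-\nabla_{\gamma'}\nabla_{\gamma'}\nabla_{\gamma'}\gamma'$ componentwise via the Frenet equations, and reads off a system of ODEs in the curvatures $\chi_1,\chi_2,\chi_3$; the contradiction comes from the equation $\chi_1^2+\chi_2^2=0$ when $\chi_1\neq 0$. You instead never leave the invariant formulation: you reduce the $HS$-tensional condition to $\nabla_{\gamma'}\nabla_{\gamma'}V=0$ with $V=\tau(\gamma)$ (which is exactly the paper's starting point, since $\nabla_{\partial_s}\partial_s=0$ on $I$), and then extract everything from successive derivatives of the unit-speed identity $\langle V,\gamma'\rangle=0$ together with the convexity-type identity $(|V|^2)''=2|\nabla_{\gamma'}V|^2$. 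Your argument is more elementary and arguably more robust: the Frenet frame is only defined where the relevant derivatives are linearly independent, whereas your metric identities hold everywhere along the curve without any genericity assumption, and they sidestep the (slightly typo-prone) bookkeeping of the Frenet coefficients. One small slip to fix: when you differentiate $\langle\nabla_{\gamma'}V,\gamma'\rangle=-|V|^2$, the left side becomes $\langle\nabla_{\gamma'}\nabla_{\gamma'}V,\gamma'\rangle+\langle\nabla_{\gamma'}V,V\rangle=\langle\nabla_{\gamma'}V,V\rangle$, not $2\langle\nabla_{\gamma'}V,V\rangle$; the resulting equation is $3\langle\nabla_{\gamma'}V,V\rangle=0$ rather than $4\langle\nabla_{\gamma'}V,V\rangle=0$, and the conclusion $|V|^2=\mathrm{const}$ is unaffected.
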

\begin{proof}
Let $\gamma:I\longrightarrow(M^{m},g)$ be a regular curve parametrized by arc
length in a Riemannian manifold $(M^{m},g)$. Let $\{F_{i},i=\overline{1,m}\}$ be the Frenet frame in $M$ along $\gamma(s)$, which is obtained as
the orthonormalization of the $m$-tuple $\{\nabla_{\partial/\partial s}^{(k)}d\gamma(\partial/\partial s)|k=\overline{1,m}\}.$ Then we have the following Frenet formula (see \cite{laugwitz}) along the curve:
\begin{gather}
% \nonumber to remove numbering (before each equation)
\begin{aligned}
  \nabla^{\gamma}_{\partial/\partial s}F_{1} &= \chi_{1}F_{2},\\
\nabla^{\gamma}_{\partial/\partial s}F_{i} &= -\chi_{i-1}F_{i-1}+\chi_{i}F_{i+1},\;\;\;\;i=\overline{2,m-1}\\
  \nabla^{\gamma}_{\partial/\partial s}F_{m} &=-\chi_{m-1}F_{m-1},
\end{aligned}
\end{gather}
where $\chi_{1},\chi_{2},\cdots,\chi_{m-1}$ are the curvatures of the curve $\gamma$. Using these Frenet equations, we obtain \added{that}
\begin{equation*}
    \tau(\gamma)=\nabla_{\gamma'}\gamma'=\chi_{1}F_{2},
\end{equation*}
where $\gamma'=\frac{d\gamma}{ds}$ and
\begin{gather}
% \nonumber to remove numbering (before each equation)
\begin{aligned}
  \Delta^{\gamma}\tau(\gamma) &= -\nabla_{\gamma'}\nabla_{\gamma'}\nabla_{\gamma'}\gamma',\\
&= (3\chi_{1}\chi_{1}')F_{1}-(\chi_{1}''-\chi_{1}^{3}-\chi_{1}\chi_{2}^{2})F_{2}-(2\chi_{1}'\chi_{2}+\chi_{1}\chi_{1}')F_{3},\\
 &-(\chi_{1}\chi_{2}\chi_{3})F_{4}.
\end{aligned}
\end{gather}
Therefore, $\gamma$ is $H$-tensional curve if and only if
\begin{equation*}
\left\{
\begin{array}{ll}
\chi_{1}\chi_{1}'=0 , & \hbox{} \\
\chi_{1}''-\chi_{1}^{3}-\chi_{1}\chi_{2}^{2}=0 , & \hbox{} \\
2\chi_{1}'\chi_{2}+\chi_{1}\chi_{2}'=0 , & \hbox{} \\
\chi_{1}\chi_{2}\chi_{3}=0 , & \hbox{}%
\end{array}
\right.
\end{equation*}
\replaced{which implies}{it follows} that $\gamma$ is a nongeodesic $HS$-tensional curve\replaced{, i.e., it}{ that} is an $H$-tensional curve with $\chi_{1}\neq0$ if and only if
\begin{equation*}
\left\{
\begin{array}{ll}
\chi_{1}=\protect{constant}\neq0, & \hbox{} \\
\chi_{1}^{2}+\chi_{2}^{2}=0 , & \hbox{} \\
\chi_{2}=\protect{constant}, & \hbox{}\\
\chi_{1}\chi_{2}\chi_{3}=0 . & \hbox{}%
\end{array}
\right.
\end{equation*}
Thus we deduce that $\gamma$ is a geodesic.
\end{proof}

Based on the previous results on the nonexistence of nonminimal $H$-tensional submanifolds of real space form, \replaced{it seems highly probable that the followings  hold:\\}{we formulate the following}

\noindent\textbf{Conjecture 3}. The only $H$-tensional submanifolds of a real space form are the minimal ones. \\

\noindent\textbf{Conjecture 4}. The only complete $H$-tensional submanifolds of a real space form are the minimal ones.

%\begin{thm}\label{Th41}
%A compact oriented Riemannian submanifold $\iota: (M^{m},g)\longrightarrow (N^{n},h)$ of a Riemannian manifold is $HS$-tensional if
%and only if it is minimal.
%\end{thm}
%\begin{proof}
%From Theorem \ref{Th4} we deduce that the compact oriented Riemannian submanifold $\iota: (M^{m},g)\longrightarrow (N^{n},h)$ of a Riemannian manifold is $HS$-tensional if and only if its mean curvature vector field $H$ is parallel i.e. $\nabla^{\iota}H=0$, but for all $X\in\Gamma(TM)$, we have
%$$0=\nabla_{X}^{\iota}H=\nabla_{X}^{N}H=-A_{H}X+\nabla_{X}^{\bot}H.$$
%Then, $H=0.$
%\end{proof}
%\begin{cor}\label{Co1}
%A compact oriented Riemannian submanifold $\iota: (M^{m},g)\longrightarrow (N^{n},h)$ of a Riemannian manifold is $HM$-tensional if
%and only if it is minimal.
%\end{cor}
%\begin{thm}\label{Th42}
%There are no compact $HS$-tensional submanifolds
%of the Euclidean space.
%\end{thm}
%\begin{proof}
%Suppose that there is a compact $HS$-tensional submanifold
%of the Euclidean space, so by the Theorem \ref{Th41} this submanifold must be minimal, but we know that there are no compact minimal submanifolds
%of the Euclidean space \cite{chen1}. Hence we deduce the assertion.
%\end{proof}
%\begin{cor}\label{Co1}
%There are no compact $HM$-tensional submanifolds
%of the Euclidean space.
%\end{cor}

\end{document}